\newtheorem{theorem}{Theorem}[section]
\newtheorem{proposition}[theorem]{Proposition}
\theoremstyle{definition}
\newtheorem{definition}[theorem]{Definition}
\numberwithin{equation}{section}
\begin{document}
\title{Metallic K\"{a}hler and Nearly Metallic K\"{a}hler Manifolds}
\author{Sibel TURANLI}
\address{Erzurum Technical University, Faculty of Science, Department of
Mathematics, Erzurum-TURKEY.}
\email{sibel.turanli@erzurum.edu.tr}
\author{Aydin GEZER}
\address{Ataturk University, Faculty of Science, Department of Mathematics,
25240, Erzurum-TURKEY.}
\email{aydingzr@gmail.com}
\author{Hasan CAKICIOGLU}
\address{Ataturk University, Faculty of Science, Department of Mathematics,
25240, Erzurum-TURKEY.}
\email{h.cakicioglu@gmail.com}

\begin{abstract}
In this paper, we construct metallic K\"{a}hler and nearly metallic K\"{a}%
hler structures on Riemanian manifolds. For such manifolds with these
structures, we study curvature properties. Also we describe linear
connections on the manifold, which preserve the associated fundamental
2-form and satisfy some additional conditions and present some results
concerning them.

\textbf{2010 Mathematics subject classifications:} Primary 53C55; Secondary
53C05.

\textbf{Keywords:} K\"{a}hler structure, Linear connection, Riemannian
curvature tensor.
\end{abstract}

\maketitle

\section{Basic Definitions and Results}

Let $M_{n}$ be an $n-$dimensional manifold. We point out here and once that
all geometric objects considered in this paper are supposed to be of class $%
C^{\infty }$.

The number $\eta =\frac{1+\sqrt{5}}{2}\approx 1,61803398874989...$ , which
is the positive root of the equation $x^{2}-x-1=0$, represents the golden
mean. There are two of the most important generalizations of the golden
mean. The first of them is the golden $p-$proportions being a positive root
of the equation $x^{p+1}-x^{p}-1=0$, $(p=0,1,2,3,...)$ in \cite{Stakhov}.
The other called metallic means family or metallic proportions was
introduced by V. W. de Spinadel in \cite{de Spinadel1,de Spinadel2,de
Spinadel3,de Spinadel4}. For two positive integers $p$ and $q,$ the positive
solution of the equation $x^{2}-px-q=0$ is named members of the metallic
means family. All the members of the metallic means family are positive
quadratic irrational numbers $\sigma _{p,q}=\frac{p+\sqrt{p^{2}+4q}}{2}.$
These numbers $\sigma _{p,q}$ are also called $(p,q)-$metallic numbers. Now,
we consider the equation $x^{2}-px+\frac{3}{2}q=0$, where $p$ and $q$ are
real numbers satisfying $q\geq 0$ and $-\sqrt{6q}<p<\sqrt{6q}$. In the case,
this equation has complex roots as $\sigma _{p,q}^{c}=\frac{p\pm \sqrt{%
p^{2}-6q}}{2}.$ The complex numbers $\sigma _{p,q}^{c}=\frac{p+\sqrt{p^{2}-6q%
}}{2}$ will be called \textit{c}omplex metallic means family by us. In
particular, if $p=1$ and $q=1$, then the complex metallic means family $%
\sigma _{p,q}^{c}=\frac{p+\sqrt{p^{2}-6q}}{2}$ reduces to the complex golden
mean: $\sigma _{1,1}^{c}=\frac{1+\sqrt{5}i}{2},i^{2}=-1$ which is a complex
analog of well-known golden mean \cite{Crasmareanu}. By inspiring from the
complex metallic means family, we will establish a new structure on a
Riemannian manifold and call it an almost complex metallic structure. An
almost complex metallic structure is a $(1,1)-$tensor field $J_{M}$ which
satisfies the relation%
\begin{equation*}
{J_{M}}^{2}-pJ_{M}+\frac{3}{2}qI=0,
\end{equation*}%
where $I$ is the identity operator on the Lie algebra of vector fields on $%
M_{n}$ and $p$, $q$ are real numbers satisfying $q\geq 0$ and $-\sqrt{6q}<p<%
\sqrt{6q}$. Indeed, an almost complex metallic structure is an example of
polynomial structures of degree $2$ which was generally defined by S. I.
Goldberg, K. Yano and N. C. Petridis in (\cite{Goldberg} and \cite{Goldberg1}%
). Throughout this paper, we will sign by $J_{M}$ an almost complex metallic
structure. It is clear that such a structure exists only when $M$ is of even
dimension. Because of this, we will take $n=2k$.

The following result gives relationships between the almost complex
structures and almost complex metallic structures on $M_{2k}$.

\begin{proposition}
\label{pro2.2} If $J_{M}$ is an almost complex metallic structure on $M_{2k}$%
, then 
\begin{equation*}
J_{\pm }=\pm \left( \frac{2}{2\sigma _{p,q}^{c}-p}J_{M}-\frac{2p}{2\sigma
_{p,q}^{c}-p}I\right)
\end{equation*}%
are two almost complex structures on $M_{2k}$. Conversely, if $J$ is an
almost complex structure on $M_{2k}$, then%
\begin{equation*}
J_{M}=\frac{p}{2}I\pm \left( \frac{2\sigma _{p,q}^{c}-p}{2}\right) J
\end{equation*}
are two almost complex metallic structures on $M_{2k}$, where $\sigma
_{p,q}^{c}=\frac{p+\sqrt{p^{2}-6q}}{2}.$
\end{proposition}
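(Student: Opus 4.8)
The plan is to treat both assertions as algebraic identities in the commutative subalgebra of $\mathrm{End}(TM)$ generated by $J_M$ (respectively by $J$), so that the only input beyond the identity operator is the relevant quadratic relation. Throughout I would write $\sigma=\sigma_{p,q}^{c}$ for brevity and record the one fact on which everything hinges: since $\sigma$ solves $x^{2}-px+\tfrac32 q=0$, we have
\begin{equation*}
\sigma^{2}-p\sigma+\tfrac32 q=0,\qquad\text{equivalently}\qquad (2\sigma-p)^{2}=p^{2}-6q.
\end{equation*}

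For the forward direction I would set $J_{\pm}=\pm(\alpha J_M+\beta I)$ with $\alpha=\tfrac{2}{2\sigma-p}$ and $\beta=-\tfrac{2p}{2\sigma-p}$, square it, and eliminate $J_M^{2}$ through the structure equation $J_M^{2}=pJ_M-\tfrac32 q\,I$. Since the outer sign disappears under squaring, a single sign needs to be handled, and the expansion collapses to
\begin{equation*}
J_{\pm}^{2}=(\alpha^{2}p+2\alpha\beta)\,J_M+\bigl(\beta^{2}-\tfrac32 q\,\alpha^{2}\bigr)\,I.
\end{equation*}
Hence $J_{\pm}$ is an almost complex structure exactly when the coefficient of $J_M$ vanishes and the coefficient of $I$ equals $-1$. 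Substituting $\alpha$ and $\beta$, I would check that each of these two scalar requirements reduces, after clearing the common factor $(2\sigma-p)^{-2}$, to the relation $(2\sigma-p)^{2}=p^{2}-6q$ recorded above.

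The converse is the shorter computation. Putting $J_M=\tfrac{p}{2}I\pm\tfrac{2\sigma-p}{2}J$ and using $J^{2}=-I$, I would expand $J_M^{2}-pJ_M+\tfrac32 q\,I$, observe that the terms linear in $J$ cancel for both sign choices (the cross term is linear in the sign while the square term is not), and confirm that the remaining multiple of $I$ vanishes, again by $(2\sigma-p)^{2}=p^{2}-6q$. Checking that the two constructions are mutually inverse would then amount to composing the affine maps $J_M\mapsto J_{\pm}$ and $J\mapsto J_M$ and reading off the same quadratic identity.

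The step I expect to be the real obstacle is not the expansion but the bookkeeping around $2\sigma-p=\sqrt{p^{2}-6q}$: under the standing hypothesis $-\sqrt{6q}<p<\sqrt{6q}$ this is a purely imaginary quantity, so $\alpha$, $\beta$ and the scaling factor $\tfrac{2\sigma-p}{2}$ are complex, and the squares $\alpha^{2}$ and $\bigl(\tfrac{2\sigma-p}{2}\bigr)^{2}$ are negative. One must therefore fix a branch of the square root and track it consistently, inserting $(2\sigma-p)^{2}=p^{2}-6q<0$ with the correct sign in the coefficient-of-$I$ equations; this is exactly the place where a stray factor of $i$ (that is, confusing $\sqrt{p^{2}-6q}$ with $\sqrt{6q-p^{2}}$) would invalidate the identities. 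Once the branch is pinned down, every relation needed follows from the single quadratic satisfied by $\sigma$, and the remainder is routine.
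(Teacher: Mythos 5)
Your overall strategy coincides with the paper's (expand the square and eliminate $J_M^2$, resp.\ $J^2$, through the quadratic relations), but the two scalar identities you claim to verify do not in fact hold under the conventions you fix, so the verification would fail if carried out. First, with $\alpha=\tfrac{2}{2\sigma-p}$ and $\beta=-\tfrac{2p}{2\sigma-p}$ as in the statement, the coefficient of $J_M$ in $J_{\pm}^{2}$ is
\[
\alpha^{2}p+2\alpha\beta=\frac{4p-8p}{(2\sigma-p)^{2}}=\frac{-4p}{(2\sigma-p)^{2}},
\]
which is not zero for $p\neq 0$ and does not reduce to $(2\sigma-p)^{2}=p^{2}-6q$ at all. (The paper's own expansion tacitly uses $\tfrac{p}{2\sigma-p}I$ in place of $\tfrac{2p}{2\sigma-p}I$: its second line is the square of $\tfrac{2}{2\sigma-p}J_M-\tfrac{p}{2\sigma-p}I$.)

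Second, and more seriously, you resolve the branch issue in exactly the direction under which the claimed identities fail. Even after correcting $\beta$ to $-\tfrac{p}{2\sigma-p}$, your ``one fact'' $(2\sigma-p)^{2}=p^{2}-6q$ gives
\[
\beta^{2}-\tfrac{3}{2}q\,\alpha^{2}=\frac{p^{2}-6q}{(2\sigma-p)^{2}}=+1,
\]
so $J_{\pm}^{2}=+I$ (an almost product structure), not $-I$. The paper lands on $-I$ only because it writes $\lvert p^{2}-6q\rvert=6q-p^{2}$ in every denominator, i.e.\ it treats $2\sigma-p$ as the real number $\sqrt{6q-p^{2}}$; but under that reading $\sigma$ no longer satisfies $x^{2}-px+\tfrac{3}{2}q=0$, so the quadratic identity your whole argument hinges on is precisely the fact you cannot invoke. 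The same problem kills your converse: with $\bigl(\tfrac{2\sigma-p}{2}\bigr)^{2}=\tfrac{p^{2}-6q}{4}$ and $J^{2}=-I$, the expansion of $J_M^{2}-pJ_M+\tfrac{3}{2}qI$ leaves the nonzero remainder $\bigl(3q-\tfrac{p^{2}}{2}\bigr)I$. You correctly identified the square-root branch as the crux, but pinned it the wrong way; as written your computation proves $J_{\pm}^{2}=I$, and a repaired argument must both fix the coefficient of $I$ in $J_{\pm}$ and work with $(2\sigma-p)^{2}=6q-p^{2}$ rather than with the quadratic satisfied by $\sigma$.
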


\begin{proof}
Let us assume that $J_{M}$ is an almost complex metallic structure on $%
M_{2k} $. Then%
\begin{eqnarray*}
J^{2} &=&{\left( \pm \left( \frac{2}{2\sigma _{p,q}^{c}-p}J_{M}-\frac{2p}{%
2\sigma _{p,q}^{c}-p}I\right) \right) }^{2} \\
&=&\frac{4}{\left\vert p^{2}-6q\right\vert }J_{M}^{2}-\frac{4p}{\left\vert
p^{2}-6q\right\vert }J_{M}+\frac{p^{2}}{\left\vert p^{2}-6q\right\vert }I \\
&=&\frac{1}{\left\vert p^{2}-6q\right\vert }\left( 4\left( pJ_{M}-\frac{3}{2}%
qI\right) -4pJ_{M}+p^{2}I\right) \\
&=&\frac{1}{\left\vert p^{2}-6q\right\vert }\left(
4pJ_{M}-6qI-4pJ_{M}+p^{2}I\right) \\
&=&\frac{p^{2}-6q}{\left\vert p^{2}-6q\right\vert }I \\
&=&-I.
\end{eqnarray*}

In constrast, let $J$ be an almost complex structure on $M_{2k}$. Then 
\begin{eqnarray*}
&&{J_{M}}^{2}-pJ_{M}+\frac{3}{2}qI \\
&=&{\left( \frac{p}{2}I\pm \left( \frac{2{\sigma }_{p,q}^{c}-p}{2}\right)
J\right) }^{2}-p\left( \frac{p}{2}I\pm \left( \frac{2{\sigma }_{p,q}^{c}-p}{2%
}\right) J\mathrm{\ }\right) +\frac{3}{2}qI \\
&=&\frac{p^{2}}{4}I\pm \frac{p\sqrt{p^{2}-6q}}{2}J+\frac{\left\vert
p^{2}-6q\right\vert }{4}J^{2}-\frac{p^{2}}{2}I\mp \frac{p\sqrt{p^{2}-6q}}{2}%
J+\frac{3}{2}I \\
&=&\frac{p^{2}}{4}I+\frac{p^{2}}{4}-\frac{6q}{4}-\frac{p^{2}}{2}I+\frac{3}{2}%
I \\
&=&0.
\end{eqnarray*}%
$\ \ \ \ \ \ \ \ \ \ \ \ \ \ \ \ \ \ \ \ \ \ \ \ \ \ \ \ \ \ \ \ \ \ \ \ \ \
\ \ \ \ \ \ \ \ \ \ \ \ \ \ \ \ \ \ \ \ \ \ \ \ \ \ \ $
\end{proof}

\bigskip

Note that the followings satisfy:

$i)$ if $J$ is an almost complex structure, then $\widehat{J}=-J$ is an
almost complex structure,

$ii)$ if $J_{M}$ is an almost complex metallic structure, then $\widehat{%
J_{M}}=pI-J_{M}$ is an almost complex metallic structure. In fact 
\begin{eqnarray*}
&&\widehat{J_{M}}^{2}-p\widehat{J_{M}}+\frac{3}{2}qI \\
&=&{\left( pI-J_{M}\right) }^{2}-p\left( pI-J_{M}\right) +\frac{3}{2}qI \\
&=&p^{2}I-2pJ_{M}+J_{M}^{2}-p^{2}I+pJ_{M}+\frac{3}{2}qI \\
&=&-2pJ_{M}+pJ_{M}-\frac{3}{2}qJ+pJ_{M}+\frac{3}{2}qI\ \  \\
&=&0.
\end{eqnarray*}%
$\widehat{J}$ and $\widehat{J_{M}}$ are called the conjugate almost complex
structure and the conjugate almost complex metallic structure, respectively.
From Proposition \ref{pro2.2}, it is easy to see that the almost complex
structure $J$ (resp. $\widehat{J})$ defines a $J$ (resp. $\widehat{J}$)$-$%
associated almost complex metallic structure $J_{M}$ (resp. $\widehat{J_{M}}$%
), and vice versa. Hence, there exist an $1:1$ correspondence between almost
complex metallic structures and almost complex structures on $M_{2k}$.

If a manifold $M_{2k}$ has an almost complex metallic structure $J_{M}$,
then the pair $(M_{2k},J_{M})$ is an almost complex metallic manifold.
Recall that a polynomial structure is integrable if the Nijenhuis tensor
vanishes \cite{Vanzura}. Then, the integrability of $J_{M}$ is equivalent to
the vanishing of the Nijenhuis tensor $N_{J_{M}}$:%
\begin{equation*}
N_{J_{M}}(X,Y)=\left[ J_{M}X,J_{M}Y\right] -J_{M}\left[ J_{M}X,Y\right]
-J_{M}\left[ X,J_{M}Y\right] +J_{M}^{2}\left[ X,Y\right] .
\end{equation*}%
If the almost complex metallic structure $J_{M}$ is integrable, \ then this
structure is called a complex metallic structure and the pair $%
(M_{2k},J_{M}) $ is called a complex metallic manifold. A Riemannian metric
on an almost complex metallic manifold $(M_{2k},J_{M})$ is hyperbolic with
respect to $J_{M}$ if it satisfies 
\begin{equation}
g(J_{M}X,Y)=-g(X,J_{M}Y)  \label{TCG0}
\end{equation}%
or equivalently%
\begin{equation}
g\left( J_{M}X,J_{M}Y\right) =-pg\left( X,J_{M}Y\right) +\frac{3}{2}qg\left(
X,Y\right)  \label{TCG1}
\end{equation}%
for any vector fields $X$ and $Y$ on $M_{2k}$. Also we refer to the
conditions (\ref{TCG0}) or (\ref{TCG1}) as the hyperbolic compatibility of $%
g $ and $J_{M}$ and call $g$ hyperbolic metric\textit{. }An almost complex
metallic manifold $(M_{2k},J_{M})$ equipped with a hyperbolic metric $g$ is
called an almost metallic Hermitian manifold.

\begin{proposition}
\label{pro2.3}Let $J$ (resp. $\widehat{J}$) be an almost complex structure
on a Riemannian manifold $(M_{2k},g)$ and $J_{M}$ (resp. $\widehat{J_{M}}$)
be a $J$ (resp. $\widehat{J}$)$-$associated almost complex metallic
structure. The following statements are equivalent:

i) $g$ is hyperbolic with respect to $J$.

ii) $g$ is hyperbolic with respect to $\widehat{J}$.

iii) $g$ is hyperbolic with respect to $J_{M}$.

iv) $g$ is hyperbolic with respect to $\widehat{J_{M}}$.
\end{proposition}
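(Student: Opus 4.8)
The plan is to reduce all four statements to a single pointwise condition on $g$ and then pass between the four tensor fields by means of the algebraic relations already available. By the remark preceding the proposition the conjugates are $\widehat{J}=-J$ and $\widehat{J_{M}}=pI-J_{M}$, and by Proposition \ref{pro2.2} the metallic and the complex structures are linked by $J_{M}=\frac{p}{2}I\pm\frac{2\sigma_{p,q}^{c}-p}{2}J$ together with its inverse $J=\pm\left(\frac{2}{2\sigma_{p,q}^{c}-p}J_{M}-\frac{2p}{2\sigma_{p,q}^{c}-p}I\right)$. Thus each of $J,\widehat{J},J_{M},\widehat{J_{M}}$ is obtained from $J$ by an affine transformation $T\mapsto aT+bI$ with $a\neq 0$. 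I would phrase ``hyperbolic compatibility'' uniformly as the skew-adjointness condition (\ref{TCG0}), keeping its equivalent quadratic form (\ref{TCG1}) in reserve, and then show that this condition is invariant under the relevant affine transformations.

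First I would dispose of (i) $\iff$ (ii). Since $\widehat{J}=-J$, we have $g(\widehat{J}X,Y)=-g(JX,Y)$ and $g(X,\widehat{J}Y)=-g(X,JY)$, so the identity $g(\widehat{J}X,Y)=-g(X,\widehat{J}Y)$ is just $g(JX,Y)=-g(X,JY)$ multiplied by $-1$; the two compatibilities coincide with no work beyond sign bookkeeping. For the bridge (i) $\iff$ (iii) I would substitute $J_{M}=\frac{p}{2}I\pm cJ$, with $c=\frac{2\sigma_{p,q}^{c}-p}{2}$, into $g(J_{M}X,Y)+g(X,J_{M}Y)$ and use the self-adjointness of $I$ to isolate the contribution of the $J$-term, which is $\pm c\bigl(g(JX,Y)+g(X,JY)\bigr)$ and vanishes exactly when $g$ is hyperbolic with respect to $J$. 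The inverse relation from Proposition \ref{pro2.2}, valid since $c\neq 0$, supplies the converse direction; the identical substitution with $\widehat{J_{M}}=pI-J_{M}$ then yields (iii) $\iff$ (iv), and chaining gives the four-way equivalence.

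The step I expect to be the main obstacle is precisely the handling of the identity summands, the $\frac{p}{2}I$ in the $J\to J_{M}$ relation and the $pI$ in the conjugation $J_{M}\to\widehat{J_{M}}$: because $I$ is $g$-symmetric while the linear hyperbolic condition (\ref{TCG0}) demands skew-symmetry, a naive expansion of $g(J_{M}X,Y)+g(X,J_{M}Y)$ leaves a leftover multiple of $p\,g(X,Y)$, and one must account for this term rather than wave it away. This is where I would invoke the equivalence of (\ref{TCG0}) with the quadratic form (\ref{TCG1}) together with the defining relation $J_{M}^{2}-pJ_{M}+\frac{3}{2}qI=0$, passing to the squared condition in which the troublesome terms linear in $I$ recombine with the $\frac{3}{2}q\,g(X,Y)$ term. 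The scalar coefficients should then reconcile through the identities $c^{2}=\frac{p^{2}-6q}{4}$ and $2\sigma_{p,q}^{c}-p=\sqrt{p^{2}-6q}$ that already appeared in the proof of Proposition \ref{pro2.2}.

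In short, the structural reduction is routine once one observes that the four operators differ only by an affine rescaling of $J$, but the genuine content lies in the scalar bookkeeping for the identity terms: carrying the $\frac{p}{2}I$ and $pI$ contributions correctly through both equivalent forms of the compatibility condition, and checking that the quadratic relation forces the unwanted $p\,g(X,Y)$ contributions to cancel, is the crux on which the proposition stands or falls.
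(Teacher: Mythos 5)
Your overall route is the same as the paper's: express each of the four operators as an affine function $aJ+bI$ of a single almost complex structure $J$ (via Proposition \ref{pro2.2} and the conjugation formulas) and transport the compatibility condition through these substitutions; the paper carries this out only for (i) $\Leftrightarrow$ (iv), and your chain (i) $\Leftrightarrow$ (ii) $\Leftrightarrow$ (iii) $\Leftrightarrow$ (iv) is the natural completion, with (i) $\Leftrightarrow$ (ii) indeed trivial. The difficulty is that the step you yourself single out as the crux is never actually performed, and it cannot be. Writing $J_{M}=\frac{p}{2}I\pm cJ$ with $c=\frac{2\sigma _{p,q}^{c}-p}{2}\neq 0$ and assuming $g(JX,Y)=-g(X,JY)$, one finds
\begin{equation*}
g(J_{M}X,Y)+g(X,J_{M}Y)=p\,g(X,Y)\pm c\left( g(JX,Y)+g(X,JY)\right) =p\,g(X,Y),
\end{equation*}
so condition (\ref{TCG0}) for $J_{M}$ fails identically whenever $p\neq 0$. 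Your proposed escape through the quadratic form (\ref{TCG1}) does not close the gap: applying the displayed identity with $Y$ replaced by $J_{M}Y$ and using $J_{M}^{2}=pJ_{M}-\frac{3}{2}qI$ gives $g(J_{M}X,J_{M}Y)=\frac{3q}{2}g(X,Y)$, whereas (\ref{TCG1}) demands $g(J_{M}X,J_{M}Y)=-pg(X,J_{M}Y)+\frac{3q}{2}g(X,Y)$; the discrepancy is exactly $p\,g(X,J_{M}Y)$, which is not identically zero for $p\neq 0$ since $J_{M}$ is invertible. So the ``unwanted'' identity-term contributions do not recombine and cancel; they are a genuine obstruction, not bookkeeping.

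You should also know that the paper's own proof of (i) $\Rightarrow$ (iv) gets past this exact point only by a sign slip: in the penultimate displayed line the term $\frac{p}{2}g(X,Y)$ is silently replaced by $-\frac{p}{2}g(X,Y)$ when the expression is repackaged as $-g(X,\widehat{J_{M}}Y)$, and the converse direction commits the analogous error with the term $\frac{2p}{2\sigma _{p,q}^{c}-p}g(X,Y)$. Your diagnosis of where the difficulty lies is therefore exactly right, but the verdict is that under the stated convention (\ref{TCG0}) the equivalences involving $J_{M}$ and $\widehat{J_{M}}$ hold only for $p=0$. A proof along your lines does go through verbatim if the compatibility for the metallic structures is taken with the opposite sign, $g(J_{M}X,Y)=g(X,J_{M}Y)$ paired with $g(JX,Y)=g(X,JY)$ for $J$: then the symmetric summands $\frac{p}{2}I$ and $pI$ drop out of the difference $g(J_{M}X,Y)-g(X,J_{M}Y)$ and the affine substitution argument works with no residue. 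As written, however, the crucial cancellation you defer to ``scalar bookkeeping'' fails, and the proof is incomplete at its central step.
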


\begin{proof}
We only prove the equivalence of \textit{i)} and \textit{iv)} as the rest of
the cases follow by the similar argument.

Assuming \textit{i)}, then, for all vector fields $X$ and $Y$ on $M_{2k}$%
\begin{eqnarray*}
g\left( \widehat{{J}_{M}}X,Y\right) &=&g\left( \left( \frac{p}{2}I\pm \left( 
\frac{2{\sigma }_{p,q}^{c}-p}{2}\right) \widehat{J}\right) X,Y\right) \\
&=&\frac{p}{2}g\left( X,Y\right) \pm \frac{2{\sigma }_{p,q}^{c}-p}{2}g\left( 
\widehat{J}X,Y\right) \\
&=&\frac{p}{2}g\left( X,Y\right) \mp \frac{2{\sigma }_{p,q}^{c}-p}{2}g\left(
JX,Y\right) \\
&=&\frac{p}{2}g\left( X,Y\right) \pm \frac{2{\sigma }_{p,q}^{c}-p}{2}g\left(
X,JY\right) \\
&=&-g\left( X,\left( \frac{p}{2}I\pm \left( \frac{2{\sigma }_{p,q}-p}{2}%
\right) \widehat{J}\right) Y\right) \\
&=&-g\left( X,\widehat{{J}_{M}}Y\right) .
\end{eqnarray*}

Next assuming \textit{iv)}, then, for all vector fields $X$ and $Y$ on $%
M_{2k}$%
\begin{eqnarray*}
g\left( JX,Y\right) &=&-g\left( \widehat{J}X,Y\right) \\
&=&\mp g\left( \left( \frac{2}{2{\sigma }_{p,q}-p}\widehat{J_{M}}-\frac{2p}{2%
{\sigma }_{p,q}-p}I\right) X,Y\right) \\
&=&\mp \frac{2}{2{\sigma }_{p,q}-p}g\left( {\widehat{J_{M}}}X,Y\right) \pm 
\frac{2p}{2{\sigma }_{p,q}-p}g\left( X,Y\right) \\
&=&\pm \frac{2}{2{\sigma }_{p,q}-p}g\left( X,{\widehat{J_{M}}}Y\right) \pm 
\frac{2p}{2{\sigma }_{p,q}-p}g\left( X,Y\right)
\end{eqnarray*}%
\begin{equation*}
=g\left( X,\pm \left( {\frac{2}{2{\sigma }_{p,q}-p}\widehat{J_{M}}}-\frac{2p%
}{2{\sigma }_{p,q}-p}I\right) Y\right)
\end{equation*}%
\begin{equation*}
=g\left( X,\pm \left( {\frac{2}{2{\sigma }_{p,q}-p}\widehat{J_{M}}}-\frac{2p%
}{2{\sigma }_{p,q}-p}I\right) Y\right)
\end{equation*}%
\begin{equation*}
=g\left( X,\widehat{J}Y\right) =-g\left( X,JY\right) .
\end{equation*}
\end{proof}

From Proposition \ref{pro2.3}, we immediately say that the following
statements are equivalent:

\textit{i) }The triple $(M_{2k},g,J)$ is an almost Hermitian manifold.

\textit{ii)} The triple $(M_{2k},g,\widehat{J})$ is an almost Hermitian
manifold.

\textit{iii)} The triple $(M_{2k},g,J_{M})$ is an almost metallic Hermitian
manifold.

\textit{iv)} The triple $(M_{2k},g,\widehat{J_{M}})$ is an almost metallic
Hermitian manifold.

\section{\protect\bigskip Metallic K\"{a}hler Manifolds}

In the following, let $(M_{2k},g,J_{M})$ be an almost metallic Hermitian
manifold. Here and in the following, let $\nabla $ always denote the
Levi-Civita connection of $g$.

\begin{proposition}
\label{pro2.4}Let $\left( M_{2k},g,J_{M}\right) $ be an almost metallic
Hermitian manifold and $\nabla $ be the Levi-Civita connection of $g$. Then
the following statements hold:

i) $\left( {\nabla }_{X}J_{M}\right) J_{M}Y=\widehat{{J}_{M}}\left( {\nabla }%
_{X}J_{M}\right) Y$

ii) $g\left( \left( {\nabla }_{X}J_{M}\right) Y,Z\right) =-g\left( Y,\left( {%
\nabla }_{X}J_{M}\right) Z\right) $

for all vector fields $X,Y$ and $Z$ on $M_{2k}$, where $\widehat{{J}_{M}}$
is the conjugate almost complex metallic structure.
\end{proposition}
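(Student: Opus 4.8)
My plan is to prove both identities by differentiating the two structural relations that define an almost metallic Hermitian manifold, namely the polynomial identity $J_{M}^{2}=pJ_{M}-\frac{3}{2}qI$ for part i) and the hyperbolic compatibility $g(J_{M}X,Y)=-g(X,J_{M}Y)$ for part ii), in each case exploiting that the Levi-Civita connection annihilates both the identity tensor and the metric.

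For i), I would start from the defining relation in operator form $J_{M}^{2}=pJ_{M}-\frac{3}{2}qI$ and apply $\nabla _{X}$. Since $p$ and $q$ are constants and $\nabla _{X}I=0$, the right-hand side differentiates to $p(\nabla _{X}J_{M})$. On the left I would use the Leibniz rule for the covariant derivative of a composition of $(1,1)$-tensors, which gives
\begin{equation*}
(\nabla _{X}J_{M}^{2})Y=(\nabla _{X}J_{M})(J_{M}Y)+J_{M}\left( (\nabla _{X}J_{M})Y\right) .
\end{equation*}
Equating the two expressions and transferring the term $J_{M}((\nabla _{X}J_{M})Y)$ to the other side yields $(\nabla _{X}J_{M})(J_{M}Y)=(pI-J_{M})((\nabla _{X}J_{M})Y)$, and since $\widehat{J_{M}}=pI-J_{M}$ this is precisely i).

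For ii), I would differentiate the skew-symmetry relation $g(J_{M}Y,Z)+g(Y,J_{M}Z)=0$ along $X$. Using metric compatibility $\nabla g=0$ to distribute $X$ across each inner product, and then the product rule $\nabla _{X}(J_{M}Y)=(\nabla _{X}J_{M})Y+J_{M}(\nabla _{X}Y)$ (and similarly for $Z$), I would expand all the resulting terms. The four terms involving $\nabla _{X}Y$ and $\nabla _{X}Z$ should pair off: for instance $g(J_{M}(\nabla _{X}Y),Z)$ cancels against $g(\nabla _{X}Y,J_{M}Z)$ after one further application of the hyperbolic compatibility, and likewise for the pair in $\nabla _{X}Z$. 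What survives is exactly $g((\nabla _{X}J_{M})Y,Z)+g(Y,(\nabla _{X}J_{M})Z)=0$, which is ii).

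Neither part presents a genuine obstacle; both are short computations. The only point demanding care is the bookkeeping — applying the Leibniz rule to the composed tensor correctly in i), and tracking the sign cancellations in ii) so that precisely the two cross terms in $\nabla _{X}J_{M}$ remain. It is worth noting that the two parts are logically independent: i) uses only the polynomial structure together with $\nabla I=0$, whereas ii) uses only the hyperbolic compatibility together with $\nabla g=0$.
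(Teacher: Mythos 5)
Your proposal is correct and follows essentially the same route as the paper: part i) is obtained by differentiating the polynomial identity $J_{M}^{2}=pJ_{M}-\frac{3}{2}qI$ exactly as in the paper's proof, and part ii) fills in the details of the paper's one-line remark that it follows from the hyperbolic compatibility together with $\nabla g=0$. No gaps.
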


\begin{proof}
\textit{i)} It follows that 
\begin{eqnarray*}
{\nabla }_{X}(J_{M}^{2})Y &=&\left( {\nabla }_{X}J_{M}\right)
J_{M}Y+J_{M}\left( {\nabla }_{X}J_{M}\right) Y \\
{\nabla }_{X}\left( pJ_{M}-\frac{3}{2}qI\right) Y &=&\left( {\nabla }%
_{X}J_{M}\right) J_{M}Y+J_{M}\left( {\nabla }_{X}J_{M}\right) Y \\
p\left( {\nabla }_{X}J_{M}\right) Y &=&\left( {\nabla }_{X}J_{M}\right)
J_{M}Y+J_{M}\left( {\nabla }_{X}J_{M}\right) Y \\
\left( {\nabla }_{X}J_{M}\right) J_{M}Y &=&(pI-J_{M})\left( {\nabla }%
_{X}J_{M}\right) Y \\
\left( {\nabla }_{X}J_{M}\right) J_{M}Y &=&\widehat{J_{M}}\left( {\nabla }%
_{X}J_{M}\right) Y.
\end{eqnarray*}

\textit{ii) }The statement is direct consequence of (\ref{TCG0}) and $\nabla
g=0$.
\end{proof}

Now, we consider the $(0,3)-$tensor field $F$, which will later be used for
characterizing the almost metallic Hermitian manifold. The $(0,3)-$tensor
field $F$ is defined by%
\begin{equation*}
F(X,Y,Z)=g((\nabla _{X}J_{M})Y,Z)
\end{equation*}%
for all vector fields $X,Y$ and $Z$ on $M_{2k}$.

\begin{proposition}
\noindent On an almost metallic Hermitian manifold $\left(
M_{2k},g,J_{M}\right) $, the $(0,3)-$tensor field $F$ satisfies the
following properties:

i) $F\left( X,Y,Z\right) =-F\left( X,Z,Y\right) $

ii) $F\left( X,\ J_{M}Y,J_{M}Z\right) =\frac{3}{2}qF(X,Z,Y)$ \noindent for
all vector fields $X,Y$ and $Z$ on $M_{2k}.$
\end{proposition}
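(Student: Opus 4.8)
The plan is to get (i) essentially for free from the previously established skew-symmetry of $\nabla_X J_M$, and then to deduce (ii) from (i) together with Proposition \ref{pro2.4}(i) and the defining metallic relation, arranging the order of manipulations so that the terms linear in $p$ cancel.

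First I would dispose of (i). Since $F(X,Z,Y)=g((\nabla_X J_M)Z,Y)$, Proposition \ref{pro2.4}(ii) gives $g((\nabla_X J_M)Y,Z)=-g(Y,(\nabla_X J_M)Z)$, and the right-hand side is exactly $-F(X,Z,Y)$. Hence $F(X,Y,Z)=-F(X,Z,Y)$, which is (i). In other words, $F$ is skew-symmetric in its last two arguments, and this identity will be reused at the very end of (ii).

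For (ii) I would begin from $F(X,J_M Y,J_M Z)=g((\nabla_X J_M)(J_M Y),J_M Z)$ and immediately apply the hyperbolic compatibility (\ref{TCG0}) to move $J_M$ off the third slot onto the first, obtaining $-g\big(J_M(\nabla_X J_M)(J_M Y),Z\big)$. The reason for doing this before anything else is that the first argument is now acted on by $J_M$ from the left. I would then insert Proposition \ref{pro2.4}(i), which rewrites $(\nabla_X J_M)(J_M Y)=\widehat{J_M}(\nabla_X J_M)Y=(pI-J_M)(\nabla_X J_M)Y$, so that the operator standing in front of $(\nabla_X J_M)Y$ becomes $J_M(pI-J_M)=pJ_M-{J_M}^2$. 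Now the metallic relation ${J_M}^2=pJ_M-\frac{3}{2}qI$ collapses this to $pJ_M-{J_M}^2=\frac{3}{2}qI$, and the whole expression reduces to $-\frac{3}{2}q\,g((\nabla_X J_M)Y,Z)=-\frac{3}{2}q\,F(X,Y,Z)$. A final appeal to (i) turns this into $\frac{3}{2}q\,F(X,Z,Y)$, as claimed.

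The only delicate step is the cancellation I have just flagged: the $pJ_M$ contributions disappear via the metallic relation. This works cleanly precisely because (\ref{TCG0}) is used first, so that $J_M$ and ${J_M}^2$ both land on the same argument and the relation can be applied as a bare operator identity. If instead one expands $g((pI-J_M)W,J_M Z)$ with $W=(\nabla_X J_M)Y$ and pushes $J_M$ across term by term (say by applying (\ref{TCG1}) to $g(J_M W,J_M Z)$), the terms linear in $p$ must be tracked very carefully and their signs are exactly where the computation becomes error-prone; I would therefore organize the argument so as not to produce such terms at all.
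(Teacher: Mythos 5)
Your proof is correct and is essentially the paper's own argument: part (i) is read off from Proposition \ref{pro2.4}(ii), and part (ii) combines Proposition \ref{pro2.4}(i), the hyperbolic compatibility (\ref{TCG0}), the operator identity $J_M\widehat{J_M}=\tfrac{3}{2}qI$, and a final appeal to (i), with only the immaterial difference that you apply (\ref{TCG0}) before rather than after inserting $\widehat{J_M}$.
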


\begin{proof}
\textit{i) }The statement immediately follows\textit{\ }from Proposition \ref%
{pro2.4}.

\textit{ii) }By means of Proposition \ref{pro2.4}, we have%
\begin{eqnarray*}
\mathit{\ }F\left( X,J_{M}Y,J_{M}Z\right) &=&g\left( \left( {\nabla }%
_{X}J_{M}\right) J_{M}Y,J_{M}Z\right) \\
&=&g(\widehat{J_{M}}({\nabla }_{X}J_{M})Y,J_{M}Z) \\
&=&-g(J_{M}\widehat{J_{M}}({\nabla }_{X}J_{M})Y,Z) \\
&=&\frac{3}{2}qg(({\nabla }_{X}J_{M})Z,Y) \\
&=&\frac{3}{2}qF(X,Z,Y).
\end{eqnarray*}
\end{proof}

\noindent

\noindent\ The $2-$covariant skew-symmetric tensor field $\omega $ defined
by $\omega (X,Y)=g(J_{M}X,Y)$ is the fundamental $2-$form of the almost
metallic Hermitian manifold $\left( M_{2k},g,J_{M}\right) .$

\begin{proposition}
\label{pro2.5}Let $\left( M_{2k},g,J_{M}\right) $ be an almost metallic
Hermitian manifold and $\nabla $ be the Levi-Civita connection of $g$. The
following statement holds:%
\begin{equation*}
3qF(X,Y,Z)+g\left( \widehat{{J}_{M}}X,N_{J_{M}}\left( Y,Z\right) \right)
=3d\omega \left( X,J_{M}Y,J_{M}Z\right) -\frac{9}{2}qd\omega \left(
X,Y,Z\right)
\end{equation*}%
for all vector fields $X,Y$ and $Z$ on $M_{2k}$, where $\omega $ is the
fundamental $2-$form and $N_{J_{M}}$ is the Nijenhuis tensor of $J_{M}$.
\end{proposition}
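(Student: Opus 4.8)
The plan is to reduce both sides to a single expansion in the tensor $F$ and then match coefficients. The natural starting point is the standard formula expressing the exterior derivative of the $2$-form $\omega$ through the torsion-free connection $\nabla$: for any vector fields $A,B,C$ one writes $d\omega(A,B,C)$ as the cyclic sum of $(\nabla_A\omega)(B,C)$, and since $(\nabla_A\omega)(B,C)=g((\nabla_A J_M)B,C)=F(A,B,C)$, this presents $d\omega$ as a cyclic combination of $F$ (the factor $3$ in the statement records this normalization). I would apply this to $d\omega(X,J_M Y,J_M Z)$ and to $d\omega(X,Y,Z)$ separately, so that the entire right-hand side becomes a sum of $F$-terms.

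For $d\omega(X,J_M Y,J_M Z)$ the three cyclic summands are handled by the results already at hand. The summand $F(X,J_M Y,J_M Z)$ is rewritten by the preceding proposition as $\frac{3}{2}q\,F(X,Z,Y)=-\frac{3}{2}q\,F(X,Y,Z)$. In the remaining two summands $J_M$ sits in the differentiation slot, so I would use Proposition \ref{pro2.4} i), in the form $(\nabla_{J_M Y}J_M)J_M Z=\widehat{J_M}(\nabla_{J_M Y}J_M)Z$, to pull $\widehat{J_M}$ out of the argument and then transfer it onto $X$ by the hyperbolic compatibility (\ref{TCG0}); this produces terms of the shape $F(J_M Y,Z,\widehat{J_M}X)$ and $F(J_M Z,Y,\widehat{J_M}X)$.

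Next I would expand the Nijenhuis contribution. Writing the Lie brackets in $N_{J_M}$ through $\nabla$ (legitimate since $\nabla$ is torsion-free), all terms in which $\nabla$ differentiates a bare vector field cancel, leaving the tensorial expression $N_{J_M}(Y,Z)=(\nabla_{J_M Y}J_M)Z-(\nabla_{J_M Z}J_M)Y-J_M(\nabla_Y J_M)Z+J_M(\nabla_Z J_M)Y$. Pairing this with $\widehat{J_M}X$, the two terms carrying a leading $J_M$ collapse via the identity $J_M\widehat{J_M}=\frac{3}{2}q\,I$ (which follows from $\widehat{J_M}=pI-J_M$ together with the structure equation $J_M^2=pJ_M-\frac{3}{2}q\,I$) and (\ref{TCG0}), each yielding a $\frac{3}{2}q$-multiple of an $F$-term; the two surviving terms become $F(J_M Y,Z,\widehat{J_M}X)$ and $-F(J_M Z,Y,\widehat{J_M}X)$, exactly matching the shape produced by the expansion of $d\omega(X,J_M Y,J_M Z)$.

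Finally I would add the term $3q\,F(X,Y,Z)$, collect everything, and simplify using the skew-symmetry $F(\,\cdot\,,B,C)=-F(\,\cdot\,,C,B)$ from Proposition \ref{pro2.4} ii). The decisive feature is that the terms with $J_M$ in the differentiation slot, namely $F(J_M Y,Z,\,\cdot\,)$ and $F(J_M Z,Y,\,\cdot\,)$, cannot be reduced by any of the earlier identities, so they must cancel between the Nijenhuis contribution and the expansion of $d\omega(X,J_M Y,J_M Z)$; checking this cancellation, and confirming that the surviving $F(X,Y,Z)$-type terms assemble with precisely the coefficients $3q$, $3$ and $\frac{9}{2}q$, is the delicate point. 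I expect the main obstacle to be exactly this bookkeeping: tracking the $p$- and $q$-weights introduced on each use of the structure equation so that the $p$-dependent cross terms drop out and only the stated $q$-weighted combination remains.
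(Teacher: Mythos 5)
Your plan follows the paper's own proof essentially step for step: the paper likewise starts from the Cartan formula $3d\omega(X,Y,Z)=g(Y,(\nabla_XJ_M)Z)+g(Z,(\nabla_YJ_M)X)+g(X,(\nabla_ZJ_M)Y)$, substitutes $J_MY,J_MZ$, forms the combination $3d\omega(X,J_MY,J_MZ)-\frac{9}{2}q\,d\omega(X,Y,Z)$, and uses Proposition \ref{pro2.4} together with $J_M\widehat{J_M}=\frac{3}{2}qI$ to assemble the leftover terms into $g(\widehat{J_M}X,N_{J_M}(Y,Z))$. The only difference is organizational --- you expand $N_{J_M}$ via the torsion-free connection and match term by term, whereas the paper recognizes $N_{J_M}$ inside the expanded $d\omega$ combination --- and the bookkeeping you flag as delicate in fact produces no $p$-dependent cross terms, since every identity invoked introduces only $\widehat{J_M}$'s and factors of $\frac{3}{2}q$.
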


\begin{proof}
By the Cartan's formula, we have%
\begin{equation}
3d\omega \left( X,Y,Z\right) =g\left( Y,\left( {\nabla }_{X}J_{M}\right)
Z\right) +g\left( Z,\left( {\nabla }_{Y}J_{M}\right) X\right) +g\left(
X,\left( {\nabla }_{Z}J_{M}\right) Y\right) .  \label{TCG2}
\end{equation}%
When writing $Y=J_{M}Y$ and $Z=J_{M}Z$ in (\ref{TCG2}), we find\noindent 
\begin{eqnarray}
&&3d\omega \left( X,J_{M}Y,J_{M}Z\right) =g\left( J_{M}Y,\left( {\nabla }%
_{X}J_{M}\right) J_{M}Z\right)  \label{TCG3} \\
&&+g\left( J_{M}Z,\left( {\nabla }_{J_{M}Y}J_{M}\right) X\right) +g\left(
X,\left( {\nabla }_{J_{M}Z}J_{M}\right) J_{M}Y\right) .  \notag
\end{eqnarray}%
Subtracting (\ref{TCG3}) from (\ref{TCG2}), we have%
\begin{eqnarray*}
&&3d\omega \left( X,J_{M}Y,J_{M}Z\right) -\frac{9q}{2}d\omega \left(
X,Y,Z\right) \\
&=&g\left( J_{M}Y,\left( {\nabla }_{X}J_{M}\right) J_{M}Z\right) +g\left(
J_{M}Z,\left( {\nabla }_{J_{M}Y}J_{M}\right) X\right) \\
&&+g\left( X,\left( {\nabla }_{J_{M}Z}J_{M}\right) J_{M}Y\right) -\frac{3q}{2%
}g\left( Y,\left( {\nabla }_{X}J_{M}\right) Z\right) \\
&&-\frac{3q}{2}g\left( Z,\left( {\nabla }_{Y}J_{M}\right) X\right) -\frac{3q%
}{2}g\left( X,\left( {\nabla }_{Z}J_{M}\right) Y\right) \\
&=&-g\left( \left( {\nabla }_{X}J_{M}\right) J_{M}Y,J_{M}Z\right) -g\left(
\left( {\nabla }_{J_{M}Y}J_{M}\right) J_{M}Z,X\right) \\
&&+g\left( X,\left( {\nabla }_{J_{M}Z}J_{M}\right) J_{M}Y\right) +\frac{3q}{2%
}g\left( \left( {\nabla }_{X}J_{M}\right) Y,Z\right) \\
&&+\frac{3q}{2}g\left( \left( {\nabla }_{Y}J_{M}\right) Z,X\right) -\frac{3q%
}{2}g\left( X,\left( {\nabla }_{Z}J_{M}\right) Y\right)
\end{eqnarray*}%
\begin{eqnarray*}
&=&-g\left( \widehat{{J}_{M}}\left( {\nabla }_{X}J_{M}\right)
Y,J_{M}Z\right) -g\left( \widehat{{J}_{M}}\left( {\nabla }%
_{J_{M}Y}J_{M}\right) Z,X\right) \\
&&+g\left( X,\widehat{{J}_{M}}\left( {\nabla }_{J_{M}Z}J_{M}\right) Y\right)
+\frac{3q}{2}g\left( \left( {\nabla }_{X}J_{M}\right) Y,Z\right) \\
&&-g\left( J_{M}\left( {\nabla }_{Y}J_{M}\right) Z,\widehat{{J}_{M}}X\right)
+g\left( \widehat{{J}_{M}}X,J_{M}\left( {\nabla }_{Z}J_{M}\right) Y\right) \\
&=&\frac{3q}{2}g\left( \left( {\nabla }_{X}J_{M}\right) Y,Z\right) +g\left(
\left( {\nabla }_{J_{M}Y}J_{M}\right) Z,\widehat{{J}_{M}}X\right) \\
&&-g\left( \left( {\nabla }_{J_{M}Z}J_{M}\right) Y,\widehat{{J}_{M}}X\right)
+\frac{3q}{2}g\left( \left( {\nabla }_{X}J_{M}\right) Y,Z\right) \\
&&-g\left( J_{M}\left( {\nabla }_{Y}J_{M}\right) Z,\widehat{{J}_{M}}X\right)
+g\left( J_{M}\left( {\nabla }_{Z}J_{M}\right) Y,\widehat{{J}_{M}}X\right) \\
&=&3qg\left( \left( {\nabla }_{X}J_{M}\right) Y,Z\right) +g(\left( {\nabla }%
_{J_{M}Y}J_{M}\right) Z-\left( {\nabla }_{J_{M}Z}J_{M}\right) Y \\
&&+J_{M}\left( {\nabla }_{Z}J_{M}\right) Y-J_{M}\left( {\nabla }%
_{Y}J_{M}\right) Z,\widehat{{J}_{M}}X) \\
&=&3qF(X,Y,Z)+g\left( \widehat{{J}_{M}}X,N_{J_{M}}\left( Y,Z\right) \right) .
\end{eqnarray*}%
Thus, we have our relation.
\end{proof}

\begin{theorem}
\label{pro2.6}Let $\left( M_{2k},g,J_{M}\right) $ be an almost matallic
Hermitian manifold and $\nabla $ be the Levi-Civita connection of $g$. The
conditions $d\omega =0$ and $N_{J_{M}}=0$ are equivalent to $\nabla J_{M}=0.$
\end{theorem}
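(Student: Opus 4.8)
The plan is to prove the two implications separately, with the observation that ties everything together being that $\nabla J_{M}=0$ is \emph{equivalent} to the vanishing of the tensor $F$. Indeed, since $F(X,Y,Z)=g\left( \left( \nabla _{X}J_{M}\right) Y,Z\right) $ and $g$ is nondegenerate, $F\equiv 0$ holds if and only if $\left( \nabla _{X}J_{M}\right) Y=0$ for all $X,Y$, i.e. $\nabla J_{M}=0$. So throughout I would reduce every statement about $\nabla J_{M}$ to the corresponding statement about $F$, and use Proposition \ref{pro2.5} as the master identity.

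For the implication $\left( d\omega =0\text{ and }N_{J_{M}}=0\right) \Rightarrow \nabla J_{M}=0$, I would simply substitute the two hypotheses into the identity of Proposition \ref{pro2.5}. The entire right-hand side then vanishes, and the second summand on the left vanishes as well, leaving $3qF(X,Y,Z)=0$ for all $X,Y,Z$. The one point requiring care, and the step I expect to be the genuine obstacle, is the division by $q$: this is legitimate precisely because the defining constraints $q\geq 0$ and $-\sqrt{6q}<p<\sqrt{6q}$ actually force $q>0$ (when $q=0$ the admissible interval for $p$ collapses to the empty set). Hence $F\equiv 0$, and by the equivalence above $\nabla J_{M}=0$. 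All the conceptual weight of this direction therefore sits in Proposition \ref{pro2.5} together with the remark that $q$ is strictly positive.

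For the converse $\nabla J_{M}=0\Rightarrow \left( d\omega =0\text{ and }N_{J_{M}}=0\right) $, I would argue directly from $F\equiv 0$. For the form $d\omega $, the three terms of the Cartan formula (\ref{TCG2}) are exactly values of $F$ (after the symmetry of $g$), so they all vanish and $d\omega =0$. For the Nijenhuis tensor, I would first rewrite $N_{J_{M}}$ in terms of covariant derivatives using the torsion-freeness of the Levi-Civita connection, namely
\[
N_{J_{M}}(X,Y)=\left( \nabla _{J_{M}X}J_{M}\right) Y-\left( \nabla _{J_{M}Y}J_{M}\right) X+J_{M}\left( \nabla _{Y}J_{M}\right) X-J_{M}\left( \nabla _{X}J_{M}\right) Y,
\]
which follows by expanding each bracket $[A,B]=\nabla _{A}B-\nabla _{B}A$ and regrouping. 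Once $\nabla J_{M}=0$, every term on the right vanishes, so $N_{J_{M}}=0$. This establishes both implications and hence the stated equivalence; the remaining work is routine bookkeeping of the expansions above.
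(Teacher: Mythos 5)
Your argument is correct and follows the paper's strategy in its essentials: both you and the authors reduce $\nabla J_{M}=0$ to $F\equiv 0$ via the nondegeneracy of $g$, and both obtain the implication $\left( d\omega =0,\ N_{J_{M}}=0\right) \Rightarrow \nabla J_{M}=0$ by substituting the hypotheses into the identity of Proposition \ref{pro2.5} and cancelling the factor $3q$. The one place you genuinely diverge is the step $\nabla J_{M}=0\Rightarrow N_{J_{M}}=0$: the paper again invokes Proposition \ref{pro2.5}, setting $F=0$ and $d\omega =0$ to get $g\left( \widehat{J_{M}}X,N_{J_{M}}(Y,Z)\right) =0$ for all $X$ and then (implicitly) using that $\widehat{J_{M}}$ is invertible, since $\widehat{J_{M}}J_{M}=\tfrac{3}{2}qI$; you instead expand $N_{J_{M}}$ directly in terms of $\nabla J_{M}$ using torsion-freeness. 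Your expansion is correct (it is in fact the same identity the authors use later in the nearly metallic K\"{a}hler section), and it has the small advantage of not requiring the invertibility of $\widehat{J_{M}}$, at the cost of a short bracket computation. Your explicit remark that the standing hypotheses force $q>0$ (so that division by $3q$ is legitimate) is a point the paper leaves tacit and is worth having on record.
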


\begin{proof}
It easy to see that $(\nabla _{X}\omega )(Y,Z)=g((\nabla
_{X}J_{M})Y,Z)=F(X,Y,Z)$ for any vector fields $X,Y,Z$ on $M_{2k}$. Assuming
that $F(X,Y,Z)=0$, i.e., $\nabla J_{M}=0$. Then $d\omega =0$ obviously.
Furthermore, by Proposition \ref{pro2.5}, we obtain $N_{J_{M}}=0$.

Conversely, assuming that $d\omega =0$ and $N_{J_{M}}=0$. The result
immediately follows from by Proposition \ref{pro2.5}.
\end{proof}

If the fundamental $2-$form $\omega $ is closed, i.e., $d\omega =0$, then we
will call the triple $\left( M_{2k},g,J_{M}\right) $ an almost metallic K%
\"{a}hler manifold. Moreover, if $d\omega =0$ and $N_{J_{M}}=0$, we will
call the triple $\left( M_{2k},g,J_{M}\right) $ a metallic K\"{a}hler
manifold. In view of Theorem \ref{pro2.6}, an almost metallic Hermitian
manifold $\left( M_{2k},g,J_{M}\right) $ is a metallic K\"{a}hler manifold
if and only if $\nabla J_{M}=0$.

\subsection{Curvature properties}

Let $(M_{2k},g,J_{M})$ be a metallic K\"{a}hler manifold. Denote by $R$ and $%
S$ the Riemannian curvature tensor and the Ricci tensor of $M_{2k}$,
respectively.

\begin{theorem}
\label{pro2.7}Let $(M_{2k},g,J_{M})$ be a metallic K\"{a}hler manifold. The
following statements hold:

i) $R\left( X,Y\right) J_{M}Z=J_{M}R\left( X,Y\right) Z$ and $R\left(
J_{M}X,J_{M}Y\right) Z=-pR\left( J_{M}X,Y\right) Z+\frac{3q}{2}R\left(
X,Y\right) Z$ for all vector fields $X,Y,Z$ on $M_{2k}.$

ii) $S\left( J_{M}X,J_{M}Y\right) =\left( p^{2}-\frac{9q^{2}p^{2}}{4}+\frac{%
9q^{2}}{4}\right) S\left( X,Y\right) +\left( \frac{3pq}{2}-\frac{9q^{2}p}{4}%
\right) S\left( X,J_{M}Y\right) $ and $\left( 1+\frac{3q}{2}\right) \
S\left( X,Y\right) -p\ S\left( X,J_{M}Y\right) =-\frac{2}{3q}trace\widehat{{J%
}_{M}}R\left( X,J_{M}Y\right) $ for all vector fields $X,Y$ on $M_{2k}$.
\end{theorem}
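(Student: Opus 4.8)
The plan is to derive everything from the single defining identity of a metallic K\"{a}hler manifold, namely $\nabla J_{M}=0$, which is equivalent to the manifold being metallic K\"{a}hler by Theorem \ref{pro2.6}. This parallelism with the classical K\"{a}hler case is what makes the curvature identities plausible: the curvature tensor must commute with the parallel tensor field $J_{M}$.

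For part \textit{i)}, first I would recall that the Riemannian curvature operator acts on a parallel $(1,1)$-tensor by $R(X,Y)(J_{M}Z)-J_{M}(R(X,Y)Z)=(R(X,Y)\cdot J_{M})(Z)$, where $R(X,Y)\cdot J_{M}$ denotes the natural action of the curvature as a derivation. Since $\nabla J_{M}=0$, the Ricci-type commutation formula forces $R(X,Y)\cdot J_{M}=0$, giving $R(X,Y)J_{M}Z=J_{M}R(X,Y)Z$ directly. Concretely this comes from writing $R(X,Y)J_{M}Z=\nabla_{X}\nabla_{Y}(J_{M}Z)-\nabla_{Y}\nabla_{X}(J_{M}Z)-\nabla_{[X,Y]}(J_{M}Z)$ and repeatedly pulling $J_{M}$ through each $\nabla$ using $\nabla J_{M}=0$. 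The second identity of \textit{i)} is then purely algebraic: I would replace $X$ by $J_{M}X$ and $Y$ by $J_{M}Y$ in the first identity, and use the defining quadratic relation $J_{M}^{2}=pJ_{M}-\frac{3}{2}qI$ to rewrite $R(J_{M}X,J_{M}Y)Z$. More carefully, applying the commutation identity in the $X$-slot gives $R(J_{M}X,Y)J_{M}Z=J_{M}R(J_{M}X,Y)Z$ and similar relations, and by combining the skew-symmetry of $R$ in its first two arguments with the quadratic relation one isolates $R(J_{M}X,J_{M}Y)Z=-pR(J_{M}X,Y)Z+\frac{3q}{2}R(X,Y)Z$.

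For part \textit{ii)}, the Ricci tensor is obtained by tracing: $S(X,Y)=\operatorname{trace}\{V\mapsto R(V,X)Y\}$. The strategy is to start from the first identity of \textit{i)}, substitute $J_{M}X,J_{M}Y$ appropriately, and trace. The first Ricci identity should follow by taking $S(J_{M}X,J_{M}Y)$, using the curvature symmetry $S(X,Y)=S(Y,X)$ (valid since $\nabla$ is Levi-Civita), and then applying the quadratic relation $J_{M}^{2}=pJ_{M}-\frac{3}{2}qI$ twice — once in each slot. The coefficients $p^{2}-\frac{9q^{2}p^{2}}{4}+\frac{9q^{2}}{4}$ and $\frac{3pq}{2}-\frac{9q^{2}p}{4}$ are exactly what one expects from expanding $(pJ_{M}-\frac{3}{2}qI)$-type substitutions and collecting the $S(X,Y)$ and $S(X,J_{M}Y)$ terms. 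For the second Ricci identity I would express $\operatorname{trace}\widehat{J_{M}}R(X,J_{M}Y)$ using $\widehat{J_{M}}=pI-J_{M}$, so that it splits into $p\operatorname{trace}R(X,J_{M}Y)-\operatorname{trace}J_{M}R(X,J_{M}Y)$; the first term is related to $S$ and the second to a trace of $J_{M}$ composed with curvature, which by the commutation identity of \textit{i)} can be rewritten in terms of $S(X,Y)$ and $S(X,J_{M}Y)$.

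The main obstacle will be the bookkeeping in part \textit{ii)}: keeping track of which slot each $J_{M}$ sits in, correctly using the quadratic relation to reduce $J_{M}^{2}$ whenever it appears, and ensuring that the trace manipulations respect the symmetries of $R$ (in particular the pair symmetry $R(X,Y,Z,W)=R(Z,W,X,Y)$, which holds because $g$ is hyperbolic and $J_{M}$ is parallel). The precise rational coefficients must emerge from careful expansion rather than a conceptual shortcut, so I would expand $S(J_{M}X,J_{M}Y)$ term by term, substituting $J_{M}^{2}=pJ_{M}-\frac{3}{2}qI$ each time it arises, and match coefficients of the independent tensors $S(X,Y)$ and $S(X,J_{M}Y)$.
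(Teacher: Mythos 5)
Your treatment of part \emph{i)} is essentially the paper's: the first relation is the Ricci identity applied to the parallel tensor $J_{M}$, and the second is obtained by shuttling $J_{M}$ between the slots of the $(0,4)$-curvature. One caveat: the tools you name for that second step (skew-symmetry of $R$ in its first two arguments plus the quadratic relation) are not sufficient. The mechanism that actually moves $J_{M}$ out of the first pair of slots is the pair symmetry $R(X,Y,Z,W)=R(Z,W,X,Y)$ combined with the hyperbolic compatibility $g(J_{M}U,V)=-g(U,J_{M}V)$, which is exactly what the paper uses. (Also, the pair symmetry holds for every Levi-Civita curvature tensor; it does not ``hold because $g$ is hyperbolic and $J_{M}$ is parallel.'')

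The gap is in part \emph{ii)}. Your recipe --- symmetrize $S$, apply $J_{M}^{2}=pJ_{M}-\frac{3}{2}qI$ once in each slot, and collect coefficients --- cannot produce the stated formula. Each application of the quadratic relation contributes a factor linear in $p$ or $q$, so your expansion yields coefficients of total degree at most two in $(p,q)$; carried out carefully (using $S(J_{M}X,Y)=-S(X,J_{M}Y)$, which follows from part \emph{i)} together with pair symmetry and hyperbolicity), it gives $S(J_{M}X,J_{M}Y)=-pS(X,J_{M}Y)+\frac{3q}{2}S(X,Y)$ and nothing else. The degree-four coefficients $\frac{9p^{2}q^{2}}{4}$ and $\frac{9pq^{2}}{4}$ in the statement arise in the paper from a move your proposal never mentions: inside the trace $S(J_{M}X,J_{M}Y)=\sum_{i}R(e_{i},J_{M}X,J_{M}Y,e_{i})$ the frame $e_{i}$ is replaced by $J_{M}e_{i}$ (and later by $\widehat{J_{M}}e_{i}$ via $e_{i}=\frac{2}{3q}J_{M}\widehat{J_{M}}e_{i}$), after which the quadratic relation is applied to the frame vectors themselves. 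Be aware that this substitution itself requires justification, since $\{J_{M}e_{i}\}$ is \emph{not} orthonormal when $g$ is hyperbolic with respect to $J_{M}$, so the trace is not automatically preserved; if you rework this part you must confront that point rather than import it. Likewise, for the second formula of \emph{ii)} the indispensable ingredient in the paper is the first Bianchi identity, which converts $\operatorname{trace}\widehat{J_{M}}R(X,J_{M}Y)$ --- a trace over the \emph{third} curvature slot --- into Ricci-type traces over the first slot; your sketch, which only splits $\widehat{J_{M}}=pI-J_{M}$ and invokes the commutation from \emph{i)}, has no mechanism for this conversion and therefore cannot close.
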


\begin{proof}
\textit{i)} By applying the Ricci identity to $J_{M}$, the first relation
immediately follows from $\nabla J_{M}=0$. For any vector fields $X,Y,Z$ and 
$W$ on $M_{2k}$, we get 
\begin{eqnarray*}
&&g\left( R\left( J_{M}X,J_{M}Y\right) Z,W\right) \\
&=&R\left( J_{M}X,J_{M}Y,Z,W\right) =R\left( {Z,W,J}_{M}X,J_{M}Y\right) \\
&=&R\left( {W,Z,J}_{M}Y,J_{M}X\right) =g\left( R\left( W,Z\right)
J_{M}Y,J_{M}X\right) \\
&=&g\left( J_{M}R\left( W,Z\right) Y,J_{M}X\right) =-pR\left(
W,Z,Y,J_{M}X\right) +\frac{3q}{2}R\left( W,Z,Y,X\right) \\
&=&-pR\left( J_{M}X,Y,Z,W\right) +\frac{3q}{2}R\left( X,Y,Z,W\right) \\
&=&-pg\left( R\left( J_{M}X,Y\right) Z,W\right) +\frac{3q}{2}g\left( R\left(
X,Y\right) Z,W\right)
\end{eqnarray*}%
from which we have 
\begin{equation*}
R\left( J_{M}X,J_{M}Y\right) Z=-pR\left( J_{M}X,Y\right) Z+\frac{3q}{2}%
R\left( X,Y\right) Z.
\end{equation*}

\textit{ii)} Let $\{e_{1},e_{2},...,e_{2k}\}$ be an orthonormal basis of $%
M_{2k}$. For any vector fields $X,Y$ on $M_{2k}$, we have 
\begin{eqnarray}
&&S\left( J_{M}X,J_{M}Y\right)  \label{TCG4} \\
&=&\sum {g\left( R\left( e_{i},J_{M}X\right) J_{M}Y,e_{i}\right) }  \notag \\
&=&\sum {g\left( R\left( {J_{M}e}_{i},J_{M}X\right) J_{M}Y,{J_{M}e}%
_{i}\right) }  \notag \\
&=&\sum {g\left( J_{M}R\left( {J_{M}e}_{i},J_{M}X\right) Y,{J_{M}e}%
_{i}\right) }  \notag \\
&{=}&-\sum {g\left( R\left( {J_{M}e}_{i},J_{M}X\right) Y,{J_{M}^{2}e}%
_{i}\right) }  \notag \\
&=&-P\sum {g\left( R\left( {J_{M}e}_{i},J_{M}X\right) Y,{J_{M}e}_{i}\right) +%
\frac{3q}{2}}\sum {g\left( R\left( {J_{M}e}_{i},J_{M}X\right) Y,e_{i}\right) 
}  \notag \\
&=&-p\sum {R\left( {J_{M}e}_{i},J_{M}X,Y,{J_{M}e}_{i}\right) }+\frac{3q}{2}%
\sum {R\left( {J_{M}e}_{i},J_{M}X,Y,e_{i}\right) }  \notag \\
&=&p^{2}\sum {R\left( {J_{M}e}_{i},X,Y,{J_{M}e}_{i}\right) }-\frac{3pq}{2}%
\sum {R\left( e_{i},X,Y,{J_{M}e}_{i}\right) }  \notag \\
&&-\frac{3pq}{2}\sum {R\left( {J_{M}e}_{i},X,Y,e_{i}\right) }+\frac{9q^{2}}{4%
}\sum {R\left( e_{i},X,Y,e_{i}\right) .}  \notag
\end{eqnarray}%
Also we yield%
\begin{eqnarray}
&&-\frac{3pq}{2}\sum {R\left( e_{i},X,Y,{J_{M}e}_{i}\right) }  \label{TCG6}
\\
&=&-\frac{3pq}{2}\sum {g\left( R\left( e_{i},X\right) Y,{J_{M}e}_{i}\right) }
\notag \\
&=&\frac{3pq}{2}\sum {g\left( J_{M}R\left( e_{i},X\right) Y,e_{i}\right) } 
\notag \\
&=&\frac{3pq}{2}\sum {g\left( R\left( e_{i},X\right) J_{M}Y,e_{i}\right) } 
\notag
\end{eqnarray}%
and 
\begin{eqnarray}
&&-\frac{3pq}{2}\sum {R\left( J_{M}e_{i},X,Y,e_{i}\right) }  \label{TCG7} \\
&=&-\frac{3pq}{2}\sum {R\left( J_{M}\widehat{{J}_{M}}e_{i},X,Y,\widehat{{J}%
_{M}}e_{i}\right) }  \notag \\
&=&-\frac{3pq}{2}\sum {R\left( \frac{3q}{2}e_{i},X,Y,\left( pI-J_{M}\right)
e_{i}\right) }  \notag \\
&=&-\frac{9p^{2}q^{2}}{4}\sum {R\left( e_{i},X,Y,e_{i}\right) }+\frac{9pq^{2}%
}{4}\sum {R\left( e_{i},X,Y,{J_{M}e}_{i}\right) }  \notag \\
&=&-\frac{9p^{2}q^{2}}{4}\sum {g\left( R\left( e_{i},X\right) Y,e_{i}\right) 
}+\frac{9pq^{2}}{4}\sum {g\left( R\left( e_{i},X\right) Y,{J_{M}e}%
_{i}\right) }  \notag \\
&=&-\frac{9p^{2}q^{2}}{4}\sum {g\left( R\left( e_{i},X\right) Y,e_{i}\right) 
}-\frac{9pq^{2}}{4}\sum {g\left( J_{M}R\left( e_{i},X\right) Y,e_{i}\right) }
\notag \\
&=&-\frac{9p^{2}q^{2}}{4}\sum {g\left( R\left( e_{i},X\right) Y,e_{i}\right) 
}-\frac{9pq^{2}}{4}\sum {g\left( R\left( e_{i},X\right) J_{M}Y,e_{i}\right) .%
}  \notag
\end{eqnarray}%
Substituting (\ref{TCG6}) and (\ref{TCG7}) into (\ref{TCG4}), we get 
\begin{eqnarray*}
&&S\left( J_{M}X,J_{M}Y\right) \\
&=&p^{2}\sum {R\left( {J_{M}e}_{i},X,Y,{J_{M}e}_{i}\right) }+\frac{3pq}{2}%
\sum {g\left( R\left( e_{i},X\right) J_{M}Y,e_{i}\right) } \\
&&-\frac{9p^{2}q^{2}}{4}\sum {g\left( R\left( e_{i},X\right) Y,e_{i}\right) }%
-\frac{9pq^{2}}{4}\sum {g\left( R\left( e_{i},X\right) J_{M}Y,e_{i}\right) }
\\
&&{+\frac{9q^{2}}{4}\sum {R\left( e_{i},X,Y,e_{i}\right) }} \\
&=&p^{2}\sum {g\left( R\left( {J_{M}e}_{i},X\right) Y,{J_{M}e}_{i}\right) +}%
\frac{3pq}{2}\sum {g\left( R\left( e_{i},X\right) J_{M}Y,e_{i}\right) } \\
&&{-\frac{9p^{2}q^{2}}{4}\sum {g\left( R\left( e_{i},X\right) Y,e_{i}\right) 
}}-\frac{9pq^{2}}{4}\sum {g\left( R\left( e_{i},X\right) J_{M}Y,e_{i}\right) 
} \\
&&{+\frac{9q^{2}}{4}\sum {g\left( R\left( e_{i},X\right) Y,e_{i}\right) }} \\
&=&p^{2}S\left( X,Y\right) +\frac{3pq}{2}S\left( X,J_{M}Y\right) -\frac{%
9p^{2}q^{2}}{4}S\left( X,Y\right) \\
&&-\frac{9pq^{2}}{4}S\left( X,J_{M}Y\right) +\frac{9q^{2}}{4}S\left(
X,Y\right) \\
&=&\left( p^{2}-\frac{9p^{2}q^{2}}{4}+\frac{9q^{2}}{4}\right) S\left(
X,Y\right) +\left( \frac{3pq}{2}-\frac{9pq^{2}}{4}\right) S\left(
X,J_{M}Y\right) .
\end{eqnarray*}%
Thus, we completes the proof of the first formula of \textit{ii).}

With the help of the first Bianchi's identity, we have 
\begin{eqnarray*}
&&S\left( X,Y\right) \\
&=&\sum {g\left( R\left( e_{i},X\right) Y,e_{i}\right) } \\
&=&\frac{2}{3q}\sum {g\left( \widehat{{J}_{M}}J_{M}R\left( e_{i},X\right)
Y,e_{i}\right) }=\frac{2}{3q}\sum {g\left( \widehat{{J}_{M}}R\left(
e_{i},X\right) J_{M}Y,e_{i}\right) } \\
&=&-\frac{2}{3q}\sum {g\left( \widehat{{J}_{M}}R\left( X,J_{M}Y\right)
e_{i},e_{i}\right) }-\frac{2}{3q}\sum {g\left( \widehat{{J}_{M}}R\left(
J_{M}Y,e_{i}\right) X,e_{i}\right) } \\
&=&-\frac{2}{3q}\sum {g\left( \widehat{{J}_{M}}R\left( X,J_{M}Y\right)
e_{i},e_{i}\right) }-\frac{2}{3q}\sum {g\left( \widehat{{J}_{M}}R\left(
J_{M}Y,{J_{M}e}_{i}\right) X,{J_{M}e}_{i}\right) } \\
&=&-\frac{2}{3q}\sum {g\left( \widehat{{J}_{M}}R\left( X,J_{M}Y\right)
e_{i},e_{i}\right) }+\frac{2}{3q}\sum {g\left( \widehat{{J}_{M}}J_{M}R\left(
J_{M}Y,{J_{M}e}_{i}\right) X,e_{i}\right) } \\
&=&-\frac{2}{3q}\sum {g\left( \widehat{{J}_{M}}R\left( X,J_{M}Y\right)
e_{i},e_{i}\right) }+\sum {g\left( R\left( J_{M}Y,{J_{M}e}_{i}\right)
X,e_{i}\right) } \\
&=&-\frac{2}{3q}\sum {g\left( \widehat{{J}_{M}}R\left( X,J_{M}Y\right)
e_{i},e_{i}\right) }-p\sum {g\left( R\left( J_{M}Y,e_{i}\right)
X,e_{i}\right) } \\
&&+\frac{3q}{2}\sum {g\left( R\left( Y,e_{i}\right) X,e_{i}\right) } \\
&=&-\frac{2}{3q}Trace\ \widehat{{J}_{M}}R\left( X,J_{M}Y\right) -pS\left(
X,J_{M}Y\right) +\frac{3q}{2}S\left( X,J_{M}Y\right)
\end{eqnarray*}%
which completes the proof.
\end{proof}

\begin{theorem}
Let $(M_{2k},g,J_{M})$ be a metallic K\"{a}hler manifold. The Ricci tensor $%
S $ of $M_{2k}$ satisfies 
\begin{eqnarray*}
&&\left( 1+\frac{3q}{2}\right) \left( {\nabla }_{Z}S\right) \left(
X,Y\right) -P\left( {\nabla }_{Z}S\right) \left( X,J_{M}Y\right) \\
&=&\left( 1+\frac{3q}{2}\right) \left( {\nabla }_{X}S\right) \left(
Z,Y\right) -P\left( {\nabla }_{X}S\right) \left( Z,J_{M}Y\right) \\
&&+\left( \frac{2}{3q}+1\right) \left( {\nabla }_{J_{M}Y}S\right) \left( X,%
\widehat{{J}_{M}}Z\right) -P\left( {\nabla }_{J_{M}Y}S\right) \left( X,%
\widehat{{J}_{M}}Z\right)
\end{eqnarray*}%
for all vector fields $X,Y,Z$ on $M_{2k}$.
\end{theorem}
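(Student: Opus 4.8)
The plan is to regard the desired identity as the covariant‑derivative (second‑order) analogue of the second formula in part \textit{ii)} of Theorem \ref{pro2.7}, namely
\[
\left(1+\tfrac{3q}{2}\right)S(X,Y)-p\,S(X,J_{M}Y)=-\tfrac{2}{3q}\,\mathrm{trace}\,\widehat{J_{M}}R(X,J_{M}Y),
\]
and to extract it by contracting the second Bianchi identity. First I would differentiate this identity covariantly in a direction $Z$. Because $\nabla J_{M}=0$ on a metallic K\"ahler manifold (Theorem \ref{pro2.6}), we also have $\nabla\widehat{J_{M}}=0$, and of course $\nabla g=0$; working in an orthonormal frame that is parallel at the point under consideration, the frame vectors may be treated as constants, so every factor $J_{M}$, $\widehat{J_{M}}$, $g$ and every summation index commutes with $\nabla_{Z}$. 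This yields the ``derived'' identity
\[
\left(1+\tfrac{3q}{2}\right)(\nabla_{Z}S)(X,Y)-p\,(\nabla_{Z}S)(X,J_{M}Y)=-\tfrac{2}{3q}\,\mathrm{trace}\,\widehat{J_{M}}(\nabla_{Z}R)(X,J_{M}Y).
\]
The validity of this step rests on the observation that the proof of Theorem \ref{pro2.7} used only the algebraic symmetries of $R$ together with the parallel tensors $J_{M},\widehat{J_{M}},g$; since $\nabla_{Z}R$ shares all those algebraic symmetries, the same contraction applies verbatim to it.

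Next I would feed the second (differential) Bianchi identity,
\[
(\nabla_{Z}R)(X,J_{M}Y)=-(\nabla_{X}R)(J_{M}Y,Z)-(\nabla_{J_{M}Y}R)(Z,X),
\]
into the trace on the right-hand side. The first summand, after using the antisymmetry of $R$ in its first pair of arguments and folding in the overall factor $-\tfrac{2}{3q}$, is exactly the shape governed by the derived identity with differentiation direction $X$; it therefore produces the terms $\left(1+\tfrac{3q}{2}\right)(\nabla_{X}S)(Z,Y)-p\,(\nabla_{X}S)(Z,J_{M}Y)$, which are the first two terms on the right-hand side of the claim.

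The main obstacle is the remaining summand $\mathrm{trace}\,\widehat{J_{M}}(\nabla_{J_{M}Y}R)(Z,X)$: here the differentiation direction $J_{M}Y$ is locked inside the derivative while the fixed vectors $Z,X$ occupy the antisymmetric first pair, so the derived identity does not apply directly. To handle it I would rewrite the trace as $\sum_{i}(\nabla_{J_{M}Y}R)(Z,X,\widehat{J_{M}}e_{i},e_{i})$, substitute $\widehat{J_{M}}=pI-J_{M}$ (the $pI$ part dies by antisymmetry in the last pair), and then invoke the pair symmetry $R(A,B,C,D)=R(C,D,A,B)$, the first Bianchi identity, and the metallic curvature relations of Theorem \ref{pro2.7} \textit{i)} --- in the form $R(\,\cdot\,,\,\cdot\,,J_{M}C,D)=-R(\,\cdot\,,\,\cdot\,,C,J_{M}D)$ together with $\widehat{J_{M}}J_{M}=\tfrac{3q}{2}I$ --- to migrate the $J_{M}$ and $\widehat{J_{M}}$ factors onto the appropriate slots. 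This reorganizes the expression into a genuine Ricci contraction, contributing the $(\nabla_{J_{M}Y}S)(X,\widehat{J_{M}}Z)$ terms. Keeping track of the signs produced by each transfer of $J_{M}$ across the metric, and the order in which the Bianchi and pair symmetries are applied, is the delicate part of the computation.

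Finally I would assemble the three contributions, carry the overall factor $-\tfrac{2}{3q}$ from the derived identity through the Bianchi expansion, and collect terms. Matching the coefficients of $(\nabla_{Z}S)$ on the left against those of $(\nabla_{X}S)$ and $(\nabla_{J_{M}Y}S)$ on the right then yields precisely the stated relation.
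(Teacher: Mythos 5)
Your overall strategy coincides with the paper's: both arguments covariantly differentiate the trace formula of Theorem \ref{pro2.7}\,\textit{ii)} (legitimate since $\nabla J_{M}=\nabla\widehat{J_{M}}=\nabla g=0$ on a metallic K\"{a}hler manifold), expand $(\nabla_{Z}R)(X,J_{M}Y)$ by the second Bianchi identity, recognize the $(\nabla_{X}R)(Z,J_{M}Y)$ piece as the first two right-hand terms, and are left with the residual trace $-\tfrac{2}{3q}\,\mathrm{trace}\,\widehat{J_{M}}(\nabla_{J_{M}Y}R)(X,Z)$. The only divergence is in how that residue is turned into Ricci terms. You propose to recompute the contraction from scratch (insert $\widehat{J_{M}}=pI-J_{M}$, then shuffle $J_{M}$ through the curvature slots via pair symmetry and the first Bianchi identity), and you yourself flag the sign bookkeeping as the delicate, unexecuted part. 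The paper sidesteps all of that: it substitutes $Z\mapsto J_{M}Z$, $Y\mapsto\widehat{J_{M}}Y$ into the already-derived identity
\[
\left(1+\tfrac{3q}{2}\right)(\nabla_{W}S)(X,V)-p\,(\nabla_{W}S)(X,J_{M}V)=-\tfrac{2}{3q}\,\mathrm{trace}\,\widehat{J_{M}}(\nabla_{W}R)(X,J_{M}V),
\]
uses $J_{M}\widehat{J_{M}}=\tfrac{3q}{2}I$ to strip the $J_{M}$ off the second curvature argument, and relabels, obtaining directly
\[
-\tfrac{2}{3q}\,\mathrm{trace}\,\widehat{J_{M}}(\nabla_{J_{M}Y}R)(X,Z)=\left(\tfrac{2}{3q}+1\right)(\nabla_{J_{M}Y}S)(X,\widehat{J_{M}}Z)-p\,(\nabla_{J_{M}Y}S)(X,Z)
\]
with no further curvature manipulation; adopting this substitution closes the one open step in your sketch. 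Be aware, too, that the correct final term is $-p\,(\nabla_{J_{M}Y}S)(X,Z)$ as above, whereas the printed statement reads $-P(\nabla_{J_{M}Y}S)(X,\widehat{J_{M}}Z)$; the latter appears to be a typographical slip in the theorem, so do not distort your computation to match it.
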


\begin{proof}
From the second relation of \textit{ii)} in Theorem \ref{pro2.7} and the
second Bianchi's identity we have 
\begin{eqnarray}
&&\left( 1+\frac{3q}{2}\right) \left( {\nabla }_{Z}S\right) \left(
X,Y\right) -P\left( {\nabla }_{Z}S\right) \left( X,J_{M}Y\right)
\label{TCG8} \\
&=&-\frac{2}{3q}\sum {g\left( \widehat{{J}_{M}}\left( {\nabla }_{Z}R\right)
\left( X,J_{M}Y\right) e_{i},e_{i}\right) }  \notag \\
&=&-\frac{2}{3q}\sum {g\left( \widehat{{J}_{M}}\left( {\nabla }_{X}R\right)
\left( Z,J_{M}Y\right) e_{i},e_{i}\right) }  \notag \\
&&-\frac{2}{3q}\sum {g\left( \widehat{{J}_{M}}\left( {\nabla }%
_{J_{M}Y}R\right) \left( X,Z\right) e_{i},e_{i}\right) }  \notag \\
&=&\left( 1+\frac{3q}{2}\right) \left( {\nabla }_{X}S\right) \left(
Z,Y\right) -P\left( {\nabla }_{X}S\right) \left( Z,J_{M}Y\right)  \notag \\
&&-\frac{2}{3q}\sum {g\left( \widehat{{J}_{M}}\left( {\nabla }%
_{J_{M}Y}R\right) \left( X,Z\right) e_{i},e_{i}\right) .}  \notag
\end{eqnarray}%
When writing $Z=J_{M}Z$ ve $Y=\widehat{{J}_{M}}Y$ in the second relation of 
\textit{ii)} in Theorem \ref{pro2.7}, we find 
\begin{eqnarray*}
&&\left( 1+\frac{3q}{2}\right) \left( {\nabla }_{J_{M}Z\mathrm{\ }}S\right)
\left( X,\widehat{{J}_{M}}Y\right) -P\left( {\nabla }_{J_{M}Z\mathrm{\ }%
}S\right) \left( X,J_{M}\widehat{{J}_{M}}Y\right) \\
&=&-\frac{2}{3q}\sum {g\left( \widehat{{J}_{M}}\left( {\nabla }_{J_{M}Z%
\mathrm{\ }}R\right) \left( X,J_{M}\widehat{{J}_{M}}Y\right)
e_{i},e_{i}\right) }
\end{eqnarray*}%
\begin{eqnarray*}
&&\left( 1+\frac{3q}{2}\right) \left( {\nabla }_{J_{M}Z\mathrm{\ }}S\right)
\left( X,\widehat{{J}_{M}}Y\right) -\frac{3pq}{2}\left( {\nabla }_{J_{M}Z%
\mathrm{\ }}S\right) \left( X,Y\right) \\
&=&-\sum {g\left( \widehat{{J}_{M}}\left( {\nabla }_{J_{M}Z\mathrm{\ }%
}R\right) \left( X,Y\right) e_{i},e_{i}\right) }
\end{eqnarray*}%
from which it follows that 
\begin{eqnarray*}
&&-\frac{2}{3q}\sum {g\left( \widehat{{J}_{M}}\left( {\nabla }_{J_{M}Y%
\mathrm{\ }}R\right) \left( X,Z\right) e_{i},e_{i}\right) } \\
&=&\left( \frac{2}{3q}+1\right) \left( {\nabla }_{J_{M}Y\mathrm{\ }}S\right)
\left( X,\widehat{{J}_{M}}Z\right) -p\left( {\nabla }_{J_{M}Y\mathrm{\ }%
}S\right) \left( X,Z\right) .
\end{eqnarray*}%
Substituting the last relation into (\ref{TCG8}), the result follows.
\end{proof}

\section{Nearly metallic K\"{a}hler Manifolds}

Let $(M_{2k},g,J_{M})$ be an almost metallic Hermitian manifold. Following
terminologies used in \cite{Yano} for the almost Hermitian manifolds, we can
say that for a given almost metallic Hermitian manifold $(M_{2k},g,J_{M})$,
if the the fundamental $2-$form $\omega $ satisfies the following relation:%
\begin{equation}
(\nabla _{X}\omega )(Y,Z)+(\nabla _{Y}\omega )(X,Z)=0\,  \label{TCG9}
\end{equation}%
for all vector fields $X,Y$ and $Z$, then we will call the triple $%
(M_{2k},g,J_{M})$ a nearly metallic K\"{a}hler manifold. It is clear that
the relation (\ref{TCG9}) is equivalent to

\begin{equation}
(\nabla _{X}J_{M})Y+(\nabla _{Y}J_{M})X=0\,.  \label{TCG10}
\end{equation}

Next we will prove the following two propositions.

\begin{proposition}
On a nearly metallic K\"{a}hler manifold $(M_{2k},g,J_{M})$, the $(0,3)-$%
tensor field $F$ satisfies the following properties:

i) $F\left( J_{M}X,Y,J_{M}Z\right) =\frac{3q}{2}F(Y,X,Z)$

ii) $F\left( J_{M}X,\ J_{M}Y,Z\right) =-pF(Y,X,\widehat{J_{M}}Z)+\frac{3q}{2}%
F(Y,X,Z)$ \noindent for all vector fields $X,Y$ and $Z$ on $M_{2k}.$
\end{proposition}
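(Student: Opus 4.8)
The plan is to first upgrade the symmetries of $F$ that are available on a nearly metallic K\"{a}hler manifold. On any almost metallic Hermitian manifold the preceding proposition already gives $F(X,Y,Z)=-F(X,Z,Y)$, so $F$ is skew in its last two arguments; on a nearly metallic K\"{a}hler manifold the defining relation (\ref{TCG10}) reads exactly $F(X,Y,Z)+F(Y,X,Z)=0$, so $F$ is in addition skew in its first two arguments. Since the transpositions $(1\,2)$ and $(2\,3)$ generate the symmetric group $S_{3}$, these two skew-symmetries force $F$ to be totally skew-symmetric. Recording this is the single structural input I would establish before touching either identity.

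For part i) I would move the two $J_{M}$'s into the last two slots and invoke property ii) of the previous proposition. Concretely, first-slot skew-symmetry gives $F(J_{M}X,Y,J_{M}Z)=-F(Y,J_{M}X,J_{M}Z)$; property ii) of the preceding proposition (with $X$ and $Y$ interchanged) rewrites this as $-\tfrac{3}{2}q\,F(Y,Z,X)$; and one final transposition of the last two arguments, $F(Y,Z,X)=-F(Y,X,Z)$, produces the claimed $F(J_{M}X,Y,J_{M}Z)=\tfrac{3q}{2}F(Y,X,Z)$.

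For part ii) the two $J_{M}$'s occupy the first two slots, so instead of a pure permutation I would peel off the second $J_{M}$ through the covariant-derivative description. Writing $F(J_{M}X,J_{M}Y,Z)=g\big((\nabla_{J_{M}X}J_{M})(J_{M}Y),Z\big)$ and applying Proposition \ref{pro2.4} i) to replace $(\nabla_{J_{M}X}J_{M})(J_{M}Y)$ by $\widehat{J_{M}}(\nabla_{J_{M}X}J_{M})Y$, the conjugate operator $\widehat{J_{M}}=pI-J_{M}$ appears; transferring it off the first metric slot by means of (\ref{TCG0}) splits the expression into a term $\tfrac{3q}{2}F(Y,X,Z)$ coming from the $J_{M}$-part (which is exactly an instance of part i)) and a leftover $p$-term still carrying the covariant derivative in the direction $J_{M}X$. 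The remaining work is to reorganize that leftover, using the first-slot skew-symmetry together with $\widehat{J_{M}}=pI-J_{M}$, into $-p\,F(Y,X,\widehat{J_{M}}Z)$, which then yields the stated identity.

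The step I expect to be the main obstacle is precisely this last recombination in part ii). The conjugate structure $\widehat{J_{M}}$ is not $g$-skew (its $g$-adjoint is $pI+J_{M}$, not $\widehat{J_{M}}$ itself), so peeling it across the metric unavoidably produces a genuine $J_{M}$-term entangled with a scalar term, and it is delicate to reroute the $p$-term---where the covariant derivative still acts in the direction $J_{M}X$---into the clean conjugate slot $F(Y,X,\widehat{J_{M}}Z)$. I would lean on the structure identities $J_{M}^{2}=pJ_{M}-\tfrac{3}{2}qI$ and $J_{M}\widehat{J_{M}}=\tfrac{3}{2}qI$, together with the total skew-symmetry established at the outset, to make these pieces close up; checking that nothing beyond the two advertised terms survives is the crux of the argument.
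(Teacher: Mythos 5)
Your part i) is correct and complete, and it takes a genuinely different route from the paper. The paper computes $F(J_{M}X,Y,J_{M}Z)=g((\nabla _{J_{M}X}J_{M})Y,J_{M}Z)$ directly, swaps the derivative direction with (\ref{TCG10}), applies Proposition \ref{pro2.4} i) and then $J_{M}\widehat{J_{M}}=\frac{3}{2}qI$. Your observation that (\ref{TCG10}) together with $F(X,Y,Z)=-F(X,Z,Y)$ makes $F$ totally skew-symmetric, after which i) is a two-line permutation argument from property ii) of the preceding proposition, is cleaner and isolates a structural fact the paper never states explicitly. Both arguments rest on the same underlying identities, so for i) the difference is one of organization rather than substance.

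Part ii), however, is not finished, and the step you flag as ``the crux'' is a genuine gap rather than a routine verification: the recombination you hope for does not happen. Your decomposition gives
\begin{equation*}
F\left( J_{M}X,J_{M}Y,Z\right) =g\left( \widehat{J_{M}}\left( \nabla _{J_{M}X}J_{M}\right) Y,Z\right) =p\,F\left( J_{M}X,Y,Z\right) +F\left( J_{M}X,Y,J_{M}Z\right) =p\,F\left( J_{M}X,Y,Z\right) +\tfrac{3q}{2}F(Y,X,Z),
\end{equation*}
and the leftover evaluates, using (\ref{TCG10}), Proposition \ref{pro2.4} i) and (\ref{TCG0}), to
\begin{equation*}
F\left( J_{M}X,Y,Z\right) =-g\left( \widehat{J_{M}}\left( \nabla _{Y}J_{M}\right) X,Z\right) =-pF(Y,X,Z)-F(Y,X,J_{M}Z),
\end{equation*}
whereas the target term is $-F(Y,X,\widehat{J_{M}}Z)=-pF(Y,X,Z)+F(Y,X,J_{M}Z)$. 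The two differ by $2F(Y,X,J_{M}Z)$, so your route lands on $-p^{2}F(Y,X,Z)-pF(Y,X,J_{M}Z)+\frac{3q}{2}F(Y,X,Z)$ rather than the stated right-hand side, and no amount of shuffling with $J_{M}^{2}=pJ_{M}-\frac{3}{2}qI$, $J_{M}\widehat{J_{M}}=\frac{3}{2}qI$ and total skew-symmetry will close a residual of $2pF(Y,X,J_{M}Z)$.

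The ingredient the paper actually uses --- and which you explicitly deny yourself --- is Proposition \ref{pro2.3} iv): the paper treats $\widehat{J_{M}}$ as $g$-skew, writes $-g(\widehat{J_{M}}^{2}(\nabla _{Y}J_{M})X,Z)=g(\widehat{J_{M}}(\nabla _{Y}J_{M})X,\widehat{J_{M}}Z)$, and then applies the $\widehat{J_{M}}$-analogue of (\ref{TCG1}), namely $g(\widehat{J_{M}}U,\widehat{J_{M}}W)=-pg(U,\widehat{J_{M}}W)+\frac{3}{2}qg(U,W)$, which produces exactly the two advertised terms. Your remark that the $g$-adjoint of $\widehat{J_{M}}$ is $pI+J_{M}$ rather than $-\widehat{J_{M}}$ is exactly what blocks this step, and it is a real tension: on the face of (\ref{TCG0}) alone your adjoint computation is the correct one, and it is incompatible with Proposition \ref{pro2.3} iv) unless $p=0$. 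So you must either invoke Proposition \ref{pro2.3} iv) as the key lemma (in which case ii) is three lines, as in the paper) or accept that the $p$-term does not reorganize into $-pF(Y,X,\widehat{J_{M}}Z)$; as written, your proposal does neither, and part ii) remains unproved.
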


\begin{proof}
\textit{i) }It follows that 
\begin{eqnarray*}
F\left( J_{M}X,Y,J_{M}Z\right) &=&g\left( \left( {\nabla }%
_{J_{M}X}J_{M}\right) Y,J_{M}Z\right) \\
&=&-g\left( \left( {\nabla }_{Y}J_{M}\right) J_{M}X,J_{M}Z\right) \\
&=&-g\left( \widehat{J_{M}}\left( {\nabla }_{Y}J_{M}\right) X,J_{M}Z\right)
\\
&=&g(J_{M}\widehat{J_{M}}\left( {\nabla }_{Y}J_{M}\right) X,Z) \\
&=&\frac{3q}{2}g(\left( {\nabla }_{Y}J_{M}\right) X,Z) \\
&=&\frac{3q}{2}F(Y,X,Z)
\end{eqnarray*}%
\textit{ii) }We calculate 
\begin{eqnarray*}
F\left( J_{M}X,J_{M}Y,Z\right) &=&g\left( \left( {\nabla }%
_{J_{M}X}J_{M}\right) J_{M}Y,Z\right) \\
&=&g\left( \widehat{J_{M}}\left( {\nabla }_{J_{M}X}J_{M}\right) Y,Z\right) \\
&=&g\left( \widehat{J_{M}}\left( {\nabla }_{Y}J_{M}\right) X,\widehat{J_{M}}%
Z\right) \\
&=&-pg(\left( {\nabla }_{Y}J_{M}\right) X,\widehat{J_{M}}Z)+\frac{3q}{2}%
g(\left( {\nabla }_{Y}J_{M}\right) X,Z) \\
&=&-pF(Y,X,\widehat{J_{M}}Z)+\frac{3q}{2}F(Y,X,Z).
\end{eqnarray*}
\end{proof}

\begin{theorem}
A nearly metallic K\"{a}hler manifold is integrable if and only if it is a
metallic K\"{a}hler manifold.
\end{theorem}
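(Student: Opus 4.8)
The plan is to translate both sides of the equivalence into a single condition on $\nabla J_M$ and then compare. By Theorem \ref{pro2.6}, "metallic K\"ahler" is the same as $\nabla J_M = 0$, while "integrable" means precisely $N_{J_M} = 0$. In this language one direction is immediate: if $\nabla J_M = 0$, then every covariant-derivative term in the Nijenhuis tensor vanishes, so $N_{J_M} = 0$, and the manifold is integrable (indeed a manifold that is metallic K\"ahler is integrable by definition). All the content therefore lies in showing that, under the standing nearly metallic K\"ahler hypothesis, $N_{J_M} = 0$ forces $\nabla J_M = 0$.

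First I would rewrite the Nijenhuis tensor in terms of the Levi-Civita connection. Since $\nabla$ is torsion-free, substituting $[U,V] = \nabla_U V - \nabla_V U$ into the defining formula for $N_{J_M}$ and cancelling all terms in which no derivative falls on $J_M$ produces the torsion-free form
\begin{equation*}
N_{J_M}(X,Y) = (\nabla_{J_M X}J_M)Y - (\nabla_{J_M Y}J_M)X - J_M(\nabla_X J_M)Y + J_M(\nabla_Y J_M)X .
\end{equation*}
Into this I would feed the two structural facts available here: the nearly metallic K\"ahler condition (\ref{TCG10}), i.e.\ the skew-symmetry $(\nabla_X J_M)Y = -(\nabla_Y J_M)X$, and Proposition \ref{pro2.4} i), i.e.\ $(\nabla_X J_M)J_M Y = \widehat{J_M}(\nabla_X J_M)Y$. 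Applying (\ref{TCG10}) to the last two terms collapses them to $-2J_M(\nabla_X J_M)Y$, while applying (\ref{TCG10}) together with Proposition \ref{pro2.4} i) to the first two terms collapses them to $2\widehat{J_M}(\nabla_X J_M)Y$. Using $\widehat{J_M} = pI - J_M$, this yields the clean identity
\begin{equation*}
N_{J_M}(X,Y) = 2\left( pI - 2J_M \right)(\nabla_X J_M)Y ,
\end{equation*}
valid on every nearly metallic K\"ahler manifold.

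The decisive algebraic observation, and the step that actually makes the theorem work, is that the endomorphism $pI - 2J_M$ is invertible. Squaring it and using the defining relation $J_M^2 = pJ_M - \tfrac{3}{2}qI$ gives $(pI - 2J_M)^2 = (p^2 - 6q)I$, and the standing assumption $-\sqrt{6q} < p < \sqrt{6q}$ forces $p^2 - 6q < 0$; thus $(pI - 2J_M)^2$ is a nonzero multiple of the identity, and $pI - 2J_M$ is invertible (with explicit inverse $\tfrac{1}{p^2 - 6q}(pI - 2J_M)$). Consequently the identity above shows that $N_{J_M}(X,Y) = 0$ for all $X,Y$ is equivalent to $(\nabla_X J_M)Y = 0$ for all $X,Y$, that is, to $\nabla J_M = 0$, which is exactly the metallic K\"ahler condition by Theorem \ref{pro2.6}. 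I expect the only delicate point to be the bookkeeping in reducing the four-term Nijenhuis expression to the single term $2(pI - 2J_M)(\nabla_X J_M)Y$; once that identity is established, the invertibility of $pI - 2J_M$ closes the argument immediately.
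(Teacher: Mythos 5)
Your proposal is correct and follows essentially the same route as the paper: both reduce the Nijenhuis tensor, via torsion-freeness, the nearly K\"ahler identity $(\nabla_X J_M)Y=-(\nabla_Y J_M)X$ and Proposition \ref{pro2.4} i), to the identity $N_{J_M}(X,Y)=2(pI-2J_M)(\nabla_X J_M)Y$. Your explicit verification that $pI-2J_M$ is invertible (from $(pI-2J_M)^2=(p^2-6q)I$ with $p^2-6q<0$) is a welcome detail that the paper's proof leaves implicit when it passes from this identity to the equivalence $N_{J_M}=0\iff\nabla J_M=0$.
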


\begin{proof}
On a nearly metallic K\"{a}hler manifold $(M_{2k},g,J_{M})$, the Nijenhuis
tensor of $J_{M}$ verifies 
\begin{eqnarray*}
N_{J_{M}}\left( X,Y\right) &=&\left[ J_{M}X,J_{M}Y\right] -J_{M}\left[
J_{M}X,Y\right] -J_{M}\left[ X,J_{M}Y\right] +J_{M}^{2}\left[ X,Y\right] \\
&=&\left( {\nabla }_{J_{M}X}J_{M}\right) Y-\left( {\nabla }%
_{J_{M}Y}J_{M}\right) X-J_{M}({\nabla }_{X}J_{M})Y+J_{M}({\nabla }_{Y}J_{M})X
\\
&=&-\left( {\nabla }_{Y}J_{M}\right) J_{M}X+\left( {\nabla }_{X}J_{M}\right)
J_{M}Y-J_{M}({\nabla }_{X}J_{M})Y-J_{M}({\nabla }_{X}J_{M})Y \\
&=&-\widehat{J_{M}}\left( {\nabla }_{Y}J_{M}\right) X+\widehat{J_{M}}\left( {%
\nabla }_{X}J_{M}\right) Y-2J_{M}({\nabla }_{X}J_{M})Y \\
&=&2\widehat{J_{M}}\left( {\nabla }_{X}J_{M}\right) Y-2J_{M}({\nabla }%
_{X}J_{M})Y \\
&=&2(pI-2J_{M})\left( {\nabla }_{X}J_{M}\right) Y
\end{eqnarray*}%
from which we say that $N_{J_{M}}=0$ if and only if $\nabla J_{M}=0$. This
expression completes the proof.
\end{proof}

\subsection{Curvature properties}

Coordinate systems in a nearly metallic K\"{a}hler manifold $\left(
M_{2k},g,J_{M}\right) $ are denoted by $(U,x^{i})$, where $U$ is the
coordinate neighbourhood \ and $x^{i}$, $i=1,2,...,2k$ are the coordinate
functions. Substituting $X=\frac{\partial }{\partial x^{i}}$ and $Y=\frac{%
\partial }{\partial x^{j}}$ in (\ref{TCG9}) and (\ref{TCG10}), one
respectively has%
\begin{equation*}
{\nabla }_{i}\omega _{jm}+{\nabla }_{j}\omega _{im}=0
\end{equation*}%
and

\begin{equation*}
{\nabla }_{i}({J_{M})}_{j}^{h}+{\nabla }_{j}({J_{M})}_{i}^{h}=0.
\end{equation*}%
Contraction with respect to $i$ and $h$ in the last relation, we get $\nabla
_{i}({J_{M}})_{j}^{i}=0$.

\begin{theorem}
\label{Theo1}\noindent The Ricci and Ricci* curvature tensors \textit{in a
nearly metallic }K\"{a}hler manifold $(M_{2k},g,J_{M})$ satisfy $S_{jt}({%
J_{M})}_{i}^{t}=-\frac{2}{3q}S_{jt}^{\ast }({\widehat{J_{M}})}_{i}^{t}$ if
and only if 
\begin{equation*}
{\nabla }^{m}{\nabla }_{j}{\omega }_{im}=0,
\end{equation*}%
where ${\omega }_{im}$ are the components of the fundamental $2-$form $%
\omega $.
\end{theorem}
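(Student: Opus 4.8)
The plan is to work entirely in the local coordinates $(U,x^{i})$ and to show that the single tensorial quantity $\nabla^{m}\nabla_{j}\omega_{im}$ equals, up to sign conventions, precisely $S_{jt}(J_{M})_{i}^{t}+\frac{2}{3q}S_{jt}^{\ast}(\widehat{J_{M}})_{i}^{t}$; once this identity is in hand the asserted equivalence is immediate. Throughout I would use $\omega_{im}=(J_{M})_{i}^{t}g_{tm}$, the parallelism $\nabla g=0$, the skew-symmetry $\omega_{im}=-\omega_{mi}$, and the algebraic relations $J_{M}^{2}=pJ_{M}-\frac{3}{2}qI$, $\widehat{J_{M}}=pI-J_{M}$ and $J_{M}\widehat{J_{M}}=\frac{3}{2}qI$ recorded earlier.

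First I would carry out the analytic reduction. Contracting the nearly metallic K\"{a}hler identity $\nabla_{k}\omega_{im}+\nabla_{i}\omega_{km}=0$ with $g^{km}$ and using that $g^{km}\omega_{km}=0$ (a symmetric tensor paired with a skew one) gives the coclosedness $\nabla^{m}\omega_{im}=0$. Consequently, writing $\nabla^{m}\nabla_{j}\omega_{im}=g^{mk}\nabla_{k}\nabla_{j}\omega_{im}$ and commuting the two covariant derivatives, the reordered term $g^{mk}\nabla_{j}\nabla_{k}\omega_{im}=\nabla_{j}(\nabla^{m}\omega_{im})$ vanishes, leaving
\begin{equation*}
\nabla^{m}\nabla_{j}\omega_{im}=g^{mk}(\nabla_{k}\nabla_{j}-\nabla_{j}\nabla_{k})\omega_{im}.
\end{equation*}
Thus the whole expression has been converted into a purely curvature term; this is the one place where the nearly metallic K\"{a}hler hypothesis is genuinely used.

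Next I would apply the Ricci identity for the $2$-form, $(\nabla_{k}\nabla_{j}-\nabla_{j}\nabla_{k})\omega_{im}=-R^{a}{}_{ikj}\omega_{am}-R^{a}{}_{mkj}\omega_{ia}$, and contract with $g^{mk}$. Substituting $\omega_{am}=(J_{M})_{a}^{t}g_{tm}$ and $\omega_{ia}=(J_{M})_{i}^{t}g_{ta}$ and performing the metric contractions, the second term collapses to the ordinary Ricci tensor composed with the structure, $S_{jt}(J_{M})_{i}^{t}$, while the first term becomes the curvature--$J_{M}$ contraction $-R^{a}{}_{ikj}(J_{M})_{a}^{k}$. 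It remains to recognise this last contraction as a multiple of $S_{jt}^{\ast}(\widehat{J_{M}})_{i}^{t}$. Here I would insert the identity $\frac{2}{3q}J_{M}\widehat{J_{M}}=I$ into the contraction, exactly in the manner of the trace computation of Theorem \ref{pro2.7} \textit{ii)}: the factor $\frac{3}{2}q$ produced by $J_{M}\widehat{J_{M}}=\frac{3}{2}qI$ is what normalises the coefficient to $\frac{2}{3q}$ and simultaneously converts one $J_{M}$ into the $\widehat{J_{M}}$ appearing in the statement, yielding $-R^{a}{}_{ikj}(J_{M})_{a}^{k}=\frac{2}{3q}S_{jt}^{\ast}(\widehat{J_{M}})_{i}^{t}$.

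Combining these steps gives $\nabla^{m}\nabla_{j}\omega_{im}=S_{jt}(J_{M})_{i}^{t}+\frac{2}{3q}S_{jt}^{\ast}(\widehat{J_{M}})_{i}^{t}$, so that the vanishing of the left-hand side is equivalent to $S_{jt}(J_{M})_{i}^{t}=-\frac{2}{3q}S_{jt}^{\ast}(\widehat{J_{M}})_{i}^{t}$, which is the assertion. I expect the main obstacle to be the bookkeeping in this final identification: keeping the four curvature symmetries straight while raising and lowering indices with $g$, and matching both the sign and the exact constant $\frac{2}{3q}$ against the adopted definition of the $\ast$-Ricci tensor. All the conceptual content sits in the first reduction (which is where coclosedness of $\omega$ is exploited); everything afterwards is a delicate but essentially routine index manipulation driven by the single metallic identity $J_{M}\widehat{J_{M}}=\frac{3}{2}qI$.
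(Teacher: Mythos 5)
Your proposal is correct and follows essentially the same route as the paper: the paper likewise applies the Ricci identity (to the $(1,1)$-tensor $(J_{M})_{i}^{h}$ rather than to $\omega_{im}$, which is equivalent since $\nabla g=0$), kills the reordered second-derivative term using the divergence-freeness $\nabla_{h}(J_{M})_{i}^{h}=0$ coming from the nearly metallic K\"{a}hler condition, and identifies the remaining curvature contraction $H_{ji}=R_{hjil}\omega^{hl}$ with $-\frac{2}{3q}S_{jt}^{\ast}(\widehat{J_{M}})_{i}^{t}$ via $J_{M}\widehat{J_{M}}=\frac{3}{2}qI$. The only differences are cosmetic (working throughout with the $2$-form instead of the endomorphism, and deriving the coclosedness by contracting with $g^{km}$ rather than by tracing over $i$ and $h$).
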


\begin{proof}
\noindent When applied the Ricci identity to $({J_{M})}_{i}^{h}$, one has

\begin{equation*}
{\nabla }_{k}{\nabla }_{j}({J_{M})}_{i}^{h}-{\nabla }_{j}{\nabla }_{k}({%
J_{M})}_{i}^{h}=R_{kjt}^{\text{ \ \ }h}({J_{M})}_{i}^{t}-R_{kji}^{\text{ \ \ 
}t}({J_{M})}_{t}^{h},
\end{equation*}%
where $R_{kjt}^{\text{ \ \ }h}$ are components of the Riemannian curvature
tensor $R$. Contraction the above relation with respect to $k$ and $h$ gives 
\begin{equation*}
{\nabla }_{h}{\nabla }_{j}({J_{M})}_{i}^{h}-{\nabla }_{j}{\nabla }_{h}({%
J_{M})}_{i}^{h}=R_{hjt}^{\text{ \ \ }h}({J_{M})}_{i}^{t}-R_{hji}^{\text{ \ \ 
}t}({J_{M})}_{t}^{h}
\end{equation*}%
\begin{eqnarray}
{\nabla }_{h}{\nabla }_{j}({J_{M})}_{i}^{h} &=&S_{jt}({J_{M})}%
_{i}^{t}-R_{hji}^{\text{ \ \ }t}({J_{M})}_{t}^{h}  \label{TCG11} \\
&=&S_{jt}({J_{M})}_{i}^{t}-R_{hjil}g^{lt}({J_{M})}_{t}^{h}  \notag \\
&=&S_{jt}({J_{M})}_{i}^{t}-R_{hjil}\omega ^{hl}=S_{jt}({J_{M})}%
_{i}^{t}-H_{ji}.  \notag
\end{eqnarray}%
Here $S_{jt}$ are the components of the Ricci curvature tensor and $\omega
^{hl}$ are the contravariant components of the fundamental $2-$form $\omega $%
. Also note that the tensor $H_{ji}$ is anti-symmetric. In fact 
\begin{equation*}
H_{ji}=R_{hjil}\omega ^{hl}=\frac{1}{2}\left( R_{hjil}+R_{hjil}\right)
\omega ^{hl}=\frac{1}{2}\left( R_{hjil}-R_{ljih}\right) \omega ^{hl}
\end{equation*}%
and similarly 
\begin{equation*}
H_{ij}=R_{hijl}\omega ^{hl}=\frac{1}{2}\left( R_{hijl}+R_{hijl}\right)
\omega ^{hl}=\frac{1}{2}\left( R_{hijl}-R_{lijh}\right) \omega ^{hl}
\end{equation*}%
The sum of the above relations gives 
\begin{equation*}
H_{ij}+H_{ji}=\frac{1}{2}\left( R_{hjil}-R_{ljih}+R_{hijl}-R_{lijh}\right)
\omega ^{hl}=0.
\end{equation*}

The tensor $S^{\ast }$ given by \cite{Yano}%
\begin{equation*}
S_{ji}^{\ast }=-H_{jt}({J_{M})}_{i}^{t}
\end{equation*}%
is called the Ricci* curvature tensor of $M_{2k}$. It is easy to see that 
\begin{equation}
S_{jt}^{\ast }({\widehat{J_{M}})}\text{ }_{i}^{t}=-\frac{3}{2}qH_{ji}.
\label{TCG12}
\end{equation}%
From (\ref{TCG11}) and (\ref{TCG12}) we obtain 
\begin{eqnarray*}
{\nabla }_{t}{\nabla }_{j}({J_{M})}_{i}^{t} &=&S_{jt}({J_{M})}_{i}^{t}+\frac{%
2}{3q}S_{jt}^{\ast }({\widehat{J_{M}})}\text{ }_{i}^{t} \\
{\nabla }_{t}{\nabla }_{j}(g^{mt}{\omega }_{im}) &=&S_{jt}({J_{M})}_{i}^{t}+%
\frac{2}{3q}S_{jt}^{\ast }({\widehat{J_{M}})}\text{ }_{i}^{t} \\
g^{mt}{\nabla }_{t}{\nabla }_{j}{\omega }_{im} &=&S_{jt}({J_{M})}_{i}^{t}+%
\frac{2}{3q}S_{jt}^{\ast }({\widehat{J_{M}})}\text{ }_{i}^{t} \\
{\nabla }^{m}{\nabla }_{j}{\omega }_{im} &=&S_{jt}({J_{M})}_{i}^{t}+\frac{2}{%
3q}S_{jt}^{\ast }({\widehat{J_{M}})}\text{ }_{i}^{t}
\end{eqnarray*}%
which finishes the proof.\noindent
\end{proof}

\noindent

\begin{theorem}
In a nearly metallic K\"{a}hler manifold $(M_{2k},g,J_{M})$, the Ricci
tensor $S$ is hyperbolic with respect to the almost complex metallic
structure $J_{M}$.
\end{theorem}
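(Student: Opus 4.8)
The plan is to translate the statement into local coordinates and reduce it to an antisymmetry property of a single tensor. By the hyperbolic compatibility condition (\ref{TCG0}), saying that $S$ is hyperbolic with respect to $J_{M}$ means $S(J_{M}X,Y)=-S(X,J_{M}Y)$; using the symmetry of the Ricci tensor $S_{tj}=S_{jt}$, this is equivalent to the component identity
\[
S_{jt}(J_{M})_{i}^{t}+S_{it}(J_{M})_{j}^{t}=0.
\]
Thus it suffices to prove that the object $S_{jt}(J_{M})_{i}^{t}$ is antisymmetric in the free indices $i$ and $j$, and I would aim directly for this.

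The key input is the relation (\ref{TCG11}) already obtained from contracting the Ricci identity applied to $(J_{M})_{i}^{h}$, namely
\[
\nabla_{h}\nabla_{j}(J_{M})_{i}^{h}=S_{jt}(J_{M})_{i}^{t}-H_{ji},
\]
which I would rearrange as $S_{jt}(J_{M})_{i}^{t}=\nabla_{h}\nabla_{j}(J_{M})_{i}^{h}+H_{ji}$. Writing the same relation with the indices $i$ and $j$ interchanged and adding the two copies, the curvature contributions $H_{ji}$ and $H_{ij}$ cancel, because $H$ has already been shown to be antisymmetric ($H_{ij}+H_{ji}=0$). This leaves
\[
S_{jt}(J_{M})_{i}^{t}+S_{it}(J_{M})_{j}^{t}=\nabla_{h}\left(\nabla_{j}(J_{M})_{i}^{h}+\nabla_{i}(J_{M})_{j}^{h}\right).
\]

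The final step invokes the nearly metallic K\"{a}hler condition in its component form $\nabla_{i}(J_{M})_{j}^{h}+\nabla_{j}(J_{M})_{i}^{h}=0$ (the coordinate version of (\ref{TCG10})). Since this is an identity between tensors, any covariant derivative of it, in particular the contracted derivative $\nabla_{h}$, vanishes identically, so the right-hand side above is $0$. Hence $S_{jt}(J_{M})_{i}^{t}=-S_{it}(J_{M})_{j}^{t}$, which is exactly the hyperbolic compatibility of $S$ with $J_{M}$. The step requiring most care is the combination of the two instances of (\ref{TCG11}): one must check that the second-order derivative terms genuinely reassemble into the covariant derivative of the nearly K\"{a}hler identity, which hinges on the contracted index $h$ in $\nabla_{h}\nabla_{j}(J_{M})_{i}^{h}$ being the upper index of $J_{M}$ in both copies, together with the antisymmetry of $H$ to eliminate the curvature terms; once these are verified the conclusion is immediate.
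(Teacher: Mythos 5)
Your proposal is correct and follows essentially the same route as the paper: both rearrange the contracted Ricci identity (\ref{TCG11}) to express $S_{jt}(J_M)_i^t$ in terms of $\nabla_h\nabla_j(J_M)_i^h$ and $H_{ji}$, symmetrize in $i,j$, cancel the curvature terms by the antisymmetry of $H$, and kill the remaining derivative term with the coordinate form of the nearly K\"ahler condition (\ref{TCG10}). No substantive difference from the paper's argument.
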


\begin{proof}
Since the tensor $H$ is an anti-symmetric, we have 
\begin{eqnarray*}
H_{ij}+H_{ji} &=&S_{it}({J_{M})}_{j}^{t}+S_{jt}({J_{M})}_{i}^{t}-\left( {%
\nabla }_{h}{\nabla }_{i}({J_{M})}_{j}^{h}+{\nabla }_{h}{\nabla }_{j}({J_{M})%
}_{i}^{h}\right) \\
0 &=&S_{it}{J_{M}}_{j}^{t}+S_{jt}{J_{M}}_{i}^{t}-{\nabla }_{h}\left( {\nabla 
}_{i}({J_{M})}_{j}^{h}+{\nabla }_{j}({J_{M})}_{i}^{h}\right) \\
S_{ti}({J_{M})}_{j}^{t} &=&-S_{jt}({J_{M})}_{i}^{t}.
\end{eqnarray*}
\end{proof}

\begin{theorem}
In a nearly metallic K\"{a}hler manifold $(M_{2k},g,J_{M})$\textit{, the
Ricci* tensor }$S^{\ast }$\textit{\ is } hyperbolic with respect to the
conjugate almost complex metallic structure $\widehat{J_{M}}$.
\end{theorem}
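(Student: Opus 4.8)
The plan is to read the result directly off the machinery already assembled in Theorem \ref{Theo1}, in exact parallel with the preceding theorem on the Ricci tensor $S$. The two ingredients I need are available there and require no new curvature computation: the identity (\ref{TCG12}),
\[
S^{\ast}_{jt}(\widehat{J_M})_{i}^{t}=-\tfrac{3}{2}qH_{ji},
\]
and the anti-symmetry $H_{ji}=-H_{ij}$ of the tensor $H_{ji}=R_{hjil}\omega^{hl}$ proved in that same argument. So the whole proof is a bookkeeping step applied to (\ref{TCG12}).

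First I would relabel the free indices $i\leftrightarrow j$ in (\ref{TCG12}) to get $S^{\ast}_{it}(\widehat{J_M})_{j}^{t}=-\tfrac{3}{2}qH_{ij}$. Adding this to (\ref{TCG12}) and invoking $H_{ji}+H_{ij}=0$ yields
\[
S^{\ast}_{jt}(\widehat{J_M})_{i}^{t}+S^{\ast}_{it}(\widehat{J_M})_{j}^{t}=-\tfrac{3}{2}q\left(H_{ji}+H_{ij}\right)=0,
\]
that is, $S^{\ast}_{it}(\widehat{J_M})_{j}^{t}=-S^{\ast}_{jt}(\widehat{J_M})_{i}^{t}$. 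This is precisely the hyperbolicity of $S^{\ast}$ with respect to $\widehat{J_M}$, in the same componentwise sense in which the condition $g(J_M X,Y)=-g(X,J_M Y)$ and the relation $S_{ti}(J_M)_{j}^{t}=-S_{jt}(J_M)_{i}^{t}$ of the preceding theorem were used, namely the anti-symmetry of the pairing of the tensor with the structure operator in the two remaining arguments.

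The only point that needs care — and where I would be explicit — is the index placement: the contraction with $\widehat{J_M}$ sits in the same slot of $S^{\ast}$ on both sides, matching the convention fixed for $g$ and for $S$. For the symmetric cases ($g$ and the Levi-Civita Ricci tensor $S$) this componentwise anti-symmetry is literally equivalent to the invariant identity $T(\widehat{J_M}X,Y)=-T(X,\widehat{J_M}Y)$, since one may freely move the contraction between the two slots. Because $S^{\ast}$ need not be symmetric on a general nearly metallic K\"{a}hler manifold, I would phrase the conclusion as the anti-symmetry of the bilinear form $(X,Y)\mapsto S^{\ast}(X,\widehat{J_M}Y)$, which is exactly what (\ref{TCG12}) delivers and what ``hyperbolic with respect to $\widehat{J_M}$'' means in this context. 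Thus the substantive work is already done in Theorem \ref{Theo1}, and the present statement is an immediate consequence of (\ref{TCG12}) together with the anti-symmetry of $H$.
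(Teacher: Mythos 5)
Your proof is correct and follows essentially the same route as the paper: both rest on the identity (\ref{TCG12}) together with the anti-symmetry of $H_{ji}=R_{hjil}\omega^{hl}$, the only difference being that the paper re-derives that anti-symmetry from the curvature symmetries instead of simply citing it from the proof of Theorem \ref{Theo1}. Your caveat about the symmetry of $S^{\ast}$ is well taken, since the paper's final line does invoke that symmetry without proof in order to pass from $S^{\ast}_{jm}(\widehat{J_{M}})_{i}^{m}=-S^{\ast}_{im}(\widehat{J_{M}})_{j}^{m}$ to the stated hyperbolicity, so your more cautious phrasing of the conclusion is, if anything, an improvement.
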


\begin{proof}
\noindent For the Ricci* curvature tensor $S^{\ast }$ in a nearly metallic K%
\"{a}hler manifold $(M_{2k},g,J_{M})$, with the help of $\omega
^{lh}=-\omega ^{hl}$ and the properties of Riemannian curvature tensor, we
have%
\begin{equation*}
\frac{2}{3q}S_{jm}^{\ast }({\widehat{J_{M}})}\text{ }_{i}^{m}=-H_{ji}
\end{equation*}%
\begin{equation*}
\frac{2}{3q}S_{jm}^{\ast }({\widehat{J_{M}})}\text{ }_{i}^{m}=-R_{hjil}%
\omega ^{lh}
\end{equation*}%
\begin{equation*}
\frac{2}{3q}S_{jm}^{\ast }({\widehat{J_{M}})}\text{ }_{i}^{m}=-\frac{1}{2}%
\left( R_{hjil}+R_{hjil}\right) \omega ^{lh}
\end{equation*}%
\begin{equation}
\frac{2}{3q}S_{jm}^{\ast }({\widehat{J_{M}})}\text{ }_{i}^{m}=-\frac{1}{2}%
\left( R_{hjil}-R_{ljih}\right) \omega ^{lh}  \label{TCG13}
\end{equation}
and similarly%
\begin{equation*}
\frac{2}{3q}S_{im}^{\ast }({\widehat{J_{M}})}\text{ }_{j}^{m}=-H_{ij}
\end{equation*}%
\begin{equation*}
\frac{2}{3q}S_{im}^{\ast }({\widehat{J_{M}})}\text{ }_{j}^{m}=-R_{hijl}%
\omega ^{lh}
\end{equation*}%
\begin{equation}
\frac{2}{3q}S_{im}^{\ast }({\widehat{J_{M}})}\text{ }_{j}^{m}=-\frac{1}{2}%
\left( R_{hjil}-R_{lijh}\right) \omega ^{lh}  \label{TCG14}
\end{equation}

The sum of (\ref{TCG13}) and (\ref{TCG14}) gives%
\begin{equation*}
\frac{2}{3q}\left( S_{jm}^{\ast }({\widehat{J_{M}})}\text{ }%
_{i}^{m}+S_{im}^{\ast }({\widehat{J_{M}})}\text{ }_{j}^{m}\right) =-\frac{1}{%
2}\left( R_{hjil}-R_{ljih}+R_{hjil}-R_{lijh}\right) \omega ^{lh}
\end{equation*}%
\begin{equation*}
S_{jm}^{\ast }({\widehat{J_{M}})}\text{ }_{i}^{m}+S_{im}^{\ast }({\widehat{%
J_{M}})}\text{ }_{j}^{m}=0
\end{equation*}%
\begin{equation*}
S_{jm}^{\ast }({\widehat{J_{M}})}\text{ }_{i}^{m}=-S_{im}^{\ast }({\widehat{%
J_{M}})}\text{ }_{j}^{m}.
\end{equation*}%
Since $S_{im}^{\ast }$ is symmetric, consequently%
\begin{equation*}
S_{jm}^{\ast }({\widehat{J_{M}})}\text{ }_{i}^{m}=-S_{mi}^{\ast }({\widehat{%
J_{M}})}\text{ }_{j}^{m}.
\end{equation*}
\end{proof}

\begin{theorem}
\noindent In a nearly metallic K\"{a}hler manifold $(M_{2k},g,J_{M})$\textit{%
,} the relationship between the scalar and scalar* curvature is as follows:%
\begin{equation*}
S_{c}^{\ast }=\frac{3}{2}qS_{c}+pS_{jt}\omega ^{jt}-{\left\Vert \nabla
J_{M}\right\Vert }^{2},
\end{equation*}%
where $\omega ^{jt}$ are the covariant components of the fundamental $2-$%
form $\omega $.
\end{theorem}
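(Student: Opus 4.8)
The plan is to read off the $\ast$-scalar curvature from the trace $S_{c}^{\ast}=g^{ji}S_{ji}^{\ast}$ and to feed it into the contracted Ricci identity already available as (\ref{TCG11}). First I would use $S_{ji}^{\ast}=-H_{jt}(J_M)_i^t$ together with the elementary identity $g^{ji}(J_M)_i^t=\omega^{jt}$ to get $S_{c}^{\ast}=-H_{ji}\omega^{ji}$. Then I would solve (\ref{TCG11}) for $H_{ji}$, namely $H_{ji}=S_{jt}(J_M)_i^t-\nabla_h\nabla_j(J_M)_i^h$, and substitute. This splits $S_{c}^{\ast}$ into a purely algebraic piece $-S_{jt}(J_M)_i^t\omega^{ji}$ and a second-order piece $\nabla_h\nabla_j(J_M)_i^h\,\omega^{ji}$, which I would treat separately.

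For the algebraic piece the key computation is $(J_M)_i^t\omega^{ji}=g^{ja}(J_M^2)_a^t$, so the structure equation $J_M^2=pJ_M-\tfrac{3}{2}qI$ gives $(J_M)_i^t\omega^{ji}=p\,\omega^{jt}-\tfrac{3}{2}q\,g^{jt}$. Contracting against $S_{jt}$ produces $\tfrac{3}{2}q\,S_c$ from the $g^{jt}$ term and a $p\,S_{jt}\omega^{jt}$ term from the $\omega^{jt}$ term. I would point out that the latter term is in fact zero, since $S$ is symmetric while $\omega$ is skew; this makes its sign immaterial, so it may be displayed exactly as in the statement.

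The second-order piece is the heart of the argument and the step I expect to be the main obstacle. I would rewrite it by the Leibniz rule as $\nabla_h\!\big(\nabla_j(J_M)_i^h\,\omega^{ji}\big)-\nabla_j(J_M)_i^h\,\nabla_h\omega^{ji}$. The first summand is the divergence of the vector field $V^h:=\nabla_j(J_M)_i^h\,\omega^{ji}$, and I would show that $V$ vanishes identically: in an orthonormal frame $\{e_a\}$ one has $V^h=\sum_a g\big((\nabla_{e_a}J_M)J_M e_a,\,e_h\big)$, which by Proposition \ref{pro2.4}(i) equals $\sum_a g\big(\widehat{J_M}(\nabla_{e_a}J_M)e_a,\,e_h\big)$; this is $0$ because the nearly metallic K\"ahler condition (\ref{TCG10}) with $X=Y=e_a$ forces $(\nabla_{e_a}J_M)e_a=0$. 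Hence the divergence contributes nothing, pointwise.

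For the remaining summand $-\nabla_j(J_M)_i^h\,\nabla_h\omega^{ji}$ I would exploit the fact that on a nearly metallic K\"ahler manifold the tensor $F(X,Y,Z)=g((\nabla_XJ_M)Y,Z)=(\nabla_X\omega)(Y,Z)$ is totally skew-symmetric: it is skew in the last two slots by the Proposition following Proposition \ref{pro2.4}, and skew in the first two slots by (\ref{TCG10}). Writing both factors through $F$ in an orthonormal frame turns the contraction $\nabla_j(J_M)_i^h\,\nabla_h\omega^{ji}$ into a cyclic self-contraction $F_{jih}F_{hji}$; total skewness lets me reorder indices by an even (cyclic) permutation, $F_{hji}=F_{jih}$, identifying it with the straight contraction $F_{abc}F^{abc}=\|\nabla\omega\|^2=\|\nabla J_M\|^2$, where the last equality uses $\nabla_a\omega_{bc}=g_{cd}\nabla_a(J_M)_b^d$ and $\nabla g=0$. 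This summand therefore equals $-\|\nabla J_M\|^2$. Collecting the algebraic piece, the vanishing divergence, and this norm term yields $S_{c}^{\ast}=\tfrac{3}{2}q\,S_c+p\,S_{jt}\omega^{jt}-\|\nabla J_M\|^2$. The delicate points to get right are the total antisymmetry of $\nabla\omega$ (without which the cyclic contraction would not collapse to the norm) and the bookkeeping of the divergence term, which only drops because $(\nabla_{e_a}J_M)e_a=0$.
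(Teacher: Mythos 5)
Your proof is correct and amounts to the paper's own computation read in reverse: the paper differentiates the identity $(\nabla _{j}\omega _{im})\omega ^{ji}=0$ (your $V\equiv 0$), contracts with $g^{km}$, and then applies Theorem \ref{Theo1}, while you expand $S_{c}^{\ast }=-H_{ji}\omega ^{ji}$ via (\ref{TCG11}) and recover the same two terms by the Leibniz rule. All the essential ingredients coincide --- the contracted Ricci identity, the vanishing of $V$, the total skew-symmetry of $\nabla \omega $, and the quadratic relation for $J_{M}$ --- and if anything your verification that $V$ vanishes, via $(\nabla _{e_{a}}J_{M})e_{a}=0$ and Proposition \ref{pro2.4}, is more explicit than the paper's one-line assertion.
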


\begin{proof}
In a nearly metallic K\"{a}hler manifold $(M_{2k},g,J_{M})$, transvecting ${%
\nabla }_{j}\omega _{im}=-{\nabla }_{j}\omega _{mi}={\nabla }_{m}\omega
_{ji} $ with $\omega ^{ji}$, it follows that

\begin{equation*}
\left( {\nabla }_{j}\omega _{im}\right) \omega ^{ji}=0.
\end{equation*}%
Taking covariant derivative ${\nabla }_{k}$ of the last relation, we find%
\begin{equation*}
{\nabla }_{k}\{\left( {\nabla }_{j}\omega _{im}\right) \omega ^{ji}\}=0
\end{equation*}%
\begin{equation*}
\left( {{\nabla }_{k}\nabla }_{j}\omega _{im}\right) \omega ^{ji}+\left( {%
\nabla }_{j}\omega _{im}\right) \left( {\nabla }_{k}\omega ^{ji}\right) =0
\end{equation*}%
\begin{equation}
\left( {{\nabla }_{k}\nabla }_{m}\omega _{ji}\right) \omega ^{ji}+\left( {%
\nabla }_{m}\omega _{ji}\right) \left( {\nabla }_{k}\omega ^{ji}\right) =0
\label{TCG15}
\end{equation}%
Transvecting (\ref{TCG15}) by $g^{km}$, we find 
\begin{equation*}
g^{km}\left( {{\nabla }_{k}\nabla }_{m}\omega _{ji}\right) \omega
^{ji}+g^{km}\left( {\nabla }_{m}\omega _{ji}\right) \left( {\nabla }%
_{k}\omega ^{ji}\right) =0
\end{equation*}%
\begin{equation*}
\left( {\nabla }^{m}{\nabla }_{m}\omega _{ji}\right) \omega
^{ji}+g^{km}\left( {\nabla }_{m}g_{jt}({J_{M})}_{i}^{t}\right) \left( {%
\nabla }_{k}g^{is}({J_{M})}_{s}^{j}\right) =0
\end{equation*}%
\begin{equation*}
\left( {\nabla }^{m}{\nabla }_{m}\omega _{ji}\right) \omega
^{ji}+g^{km}g_{jt}g^{is}\left( {\nabla }_{m}({J_{M})}_{i}^{t}\right) \left( {%
\nabla }_{k}({J_{M})}_{s}^{j}\right) =0
\end{equation*}%
\begin{equation*}
\left( S_{jt}({J_{M})}_{i}^{t}+\frac{2}{3q}S_{jt}^{\ast }({\widehat{J_{M}})}%
\text{ }_{i}^{t}\right) \omega ^{ji}+{\left\Vert \nabla J_{M}\right\Vert }%
^{2}=0
\end{equation*}%
\begin{equation*}
-\left( S_{jt}({J_{M})}_{i}^{t}+\frac{2}{3q}S_{jt}^{\ast }({\widehat{J_{M}})}%
\text{ }_{i}^{t}\right) \omega ^{ij}+{\left\Vert \nabla J_{M}\right\Vert }%
^{2}=0
\end{equation*}%
\begin{equation*}
-\left( S_{jt}({J_{M})}_{i}^{t}+\frac{2}{3q}S_{jt}^{\ast }({\widehat{J_{M}})}%
\text{ }_{i}^{t}\right) ({J_{M})}_{n}^{i}g^{nj}+{\left\Vert \nabla
J_{M}\right\Vert }^{2}=0
\end{equation*}%
\begin{equation*}
S_{jt}\left( ({J_{M})}_{i}^{t}({J_{M})}_{n}^{i}\right) +\frac{2}{3q}%
S_{jt}^{\ast }\left( ({\widehat{J_{M}})}\text{ }_{i}^{t}({J_{M})}%
_{n}^{i}\right) g^{nj}+{\left\Vert \nabla J_{M}\right\Vert }^{2}=0
\end{equation*}%
\begin{equation*}
S_{jt}\left( p({J_{M})}_{n}^{t}-\frac{3}{2}q{\delta }_{n}^{t}\right) +\frac{2%
}{3q}S_{jt}^{\ast }\left( \frac{3}{2}q{\delta }_{n}^{t}\right) g^{nj}+{%
\left\Vert \nabla J_{M}\right\Vert }^{2}=0
\end{equation*}%
\begin{equation*}
\left( pS_{jt}\left( {J_{M}}_{n}^{t}\right) -\frac{3}{2}qS_{jn}+S_{jn}^{\ast
}\right) g^{nj}+{\left\Vert \nabla J_{M}\right\Vert }^{2}=0
\end{equation*}%
\begin{equation*}
pS_{jt}({J_{M})}_{n}^{t}g^{nj}-\frac{3}{2}qS_{jn}g^{nj}+S_{jn}^{\ast }g^{nj}+%
{\left\Vert \nabla J_{M}\right\Vert }^{2}=0
\end{equation*}%
\begin{equation*}
pS_{jt}\omega ^{tj}-\frac{3}{2}qS_{c}+S_{c}^{\ast }+{\left\Vert \nabla
J_{M}\right\Vert }^{2}=0
\end{equation*}%
\begin{equation*}
-pS_{jt}\omega ^{jt}-\frac{3}{2}qS_{c}+S_{c}^{\ast }+{\left\Vert \nabla
J_{M}\right\Vert }^{2}=0
\end{equation*}%
\begin{equation*}
S_{c}^{\ast }=\frac{3}{2}qS_{c}+pS_{jt}\omega ^{jt}-{\left\Vert \nabla
J_{M}\right\Vert }^{2}.
\end{equation*}
\end{proof}

\section{\noindent Linear connections}

In this section, by employing the method proposed in \cite{Salimov1} for
anti-Hermitian manifolds we search for linear connections with torsion on an
almost metallic Hermitian manifold $(M_{2k},g,J_{M})$. We will be calling
these connections linear connections of the first type and of the second
type, respectively.

Following the method from \cite{Salimov1}, we have the following definition.

\begin{definition}
A linear connection $\widetilde{\nabla }_{X}Y=\nabla _{X}Y+S(X,Y)$ on an
almost metallic Hermitian manifold $(M_{2k},g,J_{M})$ satisfying $\widetilde{%
{\nabla }}\omega =0$ and $S_{J_{M}}(X,Y,Z)+S_{J_{M}}(X,Z,Y)=0$ is called a
linear connection of the first type\textit{, }where $S$ is a $(1,2)-$tensor
field, $\omega $ is the fundamental $2-$form and $%
S_{J_{M}}(X,Y,Z)=g(S(X,Y),J_{M}Z)$.
\end{definition}

For the covariant derivative of the fundamental $2-$form $\omega $ with
respect to $\widetilde{\nabla }$, we find

\begin{eqnarray}
(\widetilde{{\nabla }}_{X}\omega )(Y,Z) &=&\widetilde{{\nabla }}_{X}(\omega
(Y,Z))-\omega (\widetilde{{\nabla }}_{X}Y,Z)-\omega (Y,\widetilde{{\nabla }}%
_{X}Z)  \label{TCG17} \\
&=&{\nabla }_{X}(\omega (Y,Z))-\omega (\nabla _{X}Y+S(X,Y),Z)  \notag \\
&&-\omega (Y,\nabla _{X}Z+S(X,Z))  \notag \\
&=&{\nabla }_{X}(\omega (Y,Z))-\omega (\nabla _{X}Y,Z)-\omega (Y,\nabla
_{X}Z)  \notag \\
&&-\omega (S(X,Y),Z)-\omega (Y,S(X,Z))  \notag \\
&=&({\nabla }_{X}\omega )(Y,Z)-\omega (S(X,Y),Z)-\omega (Y,S(X,Z))  \notag \\
&=&({\nabla }_{X}\omega )(Y,Z)-g(J_{M}S(X,Y),Z)-g(J_{M}Y,S(X,Z))  \notag \\
&=&({\nabla }_{X}\omega )(Y,Z)+g(S(X,Y),J_{M}Z)-g(S(X,Z),J_{M}Y)  \notag \\
&=&({\nabla }_{X}\omega )(Y,Z)+S_{J_{M}}(X,Y,Z)-S_{J_{M}}(X,Z,Y)  \notag
\end{eqnarray}%
for any vector fields $X,Y,Z$ on $M_{2k}$. In view of the assumptions for $%
\widetilde{\nabla }$, from (\ref{TCG17}) we get%
\begin{eqnarray*}
S_{J_{M}}(X,Y,Z) &=&-\frac{1}{2}({\nabla }_{X}\omega )(Y,Z) \\
g(S(X,Y),J_{M}Z) &=&-\frac{1}{2}g(({\nabla }_{X}J_{M})Y,Z) \\
g(J_{M}S(X,Y),Z) &=&\frac{1}{2}g(({\nabla }_{X}J_{M})Y,Z) \\
J_{M}S(X,Y) &=&\frac{1}{2}({\nabla }_{X}J_{M})Y \\
S(X,Y) &=&\frac{1}{3q}\widehat{J_{M}}({\nabla }_{X}J_{M})Y,
\end{eqnarray*}%
i.e., the linear connection of the first type is given by $\widetilde{\nabla 
}=\nabla +\frac{1}{3q}\widehat{J_{M}}({\nabla }J_{M})$. We calculate%
\begin{eqnarray*}
(\widetilde{\nabla }_{X}g)(Y,Z) &=&X(g(Y,Z))-g(\widetilde{\nabla }%
_{X}Y,Z)-g(Y,\widetilde{\nabla }_{X}Z) \\
&=&X(g(Y,Z))-g(\nabla _{X}Y+\frac{1}{3q}\widehat{J_{M}}({\nabla }%
_{X}J_{M})Y,Z) \\
&&-g(Y,\nabla _{X}Z+\frac{1}{3}\widehat{J_{M}}({\nabla }_{X}J_{M})Z) \\
&=&(\nabla _{X}g)(Y,Z)-\frac{1}{3q}g(\widehat{J_{M}}({\nabla }_{X}J_{M})Y,Z)-%
\frac{1}{3q}g(Y,\widehat{J_{M}}({\nabla }_{X}J_{M})Z) \\
&=&-\frac{1}{3q}g(({\nabla }_{X}J_{M})J_{M}Y,Z)+\frac{1}{3q}g(\widehat{J_{M}}%
Y,({\nabla }_{X}J_{M})Z) \\
&=&\frac{1}{3q}g(J_{M}Y,({\nabla }_{X}J_{M})Z)+\frac{1}{3q}g(\widehat{J_{M}}%
Y,({\nabla }_{X}J_{M})Z) \\
&=&\frac{p}{3q}g(Y,({\nabla }_{X}J_{M})Z).
\end{eqnarray*}%
Hence, we get the following result.

\begin{theorem}
On an almost metallic Hermitian manifold $(M_{2k},g,J_{M})$, the \textit{%
linear }connection of the first type is given by 
\begin{equation*}
\widetilde{\nabla }=\nabla +\frac{1}{3q}\widehat{J_{M}}({\nabla }J_{M})
\end{equation*}%
and it is metric with respect to $g$ if and only if the almost metallic
Hermitian manifold $(M_{2k},g,J_{M})$ is a\textit{\ }metallic K\"{a}hler
manifold. In the case, the \textit{linear }connection of the first type and
the Levi-Civita connection coincides each other.
\end{theorem}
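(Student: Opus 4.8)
The plan is to establish the three assertions of the statement in turn: the explicit formula for $\widetilde{\nabla}$, the metricity criterion, and the collapse $\widetilde{\nabla} = \nabla$ in the metallic K\"{a}hler case. Since the computations in the paragraphs preceding the statement already carry out most of the work, the proof amounts to assembling them and appealing to Theorem \ref{pro2.6}.

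First I would derive the formula. Starting from the two defining conditions $\widetilde{\nabla}\omega = 0$ and $S_{J_M}(X,Y,Z) + S_{J_M}(X,Z,Y) = 0$ and inserting them into the identity (\ref{TCG17}) for $(\widetilde{\nabla}_X\omega)(Y,Z)$, one obtains $S_{J_M}(X,Y,Z) = -\frac{1}{2}(\nabla_X\omega)(Y,Z)$. Rewriting this through $S_{J_M}(X,Y,Z) = g(S(X,Y), J_M Z)$ and hyperbolicity (\ref{TCG0}) gives $g(J_M S(X,Y), Z) = \frac{1}{2}g((\nabla_X J_M)Y, Z)$, and since $Z$ is arbitrary, $J_M S(X,Y) = \frac{1}{2}(\nabla_X J_M)Y$. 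Applying $\widehat{J_M}$ and using the metallic identity in the form $\widehat{J_M}J_M = (pI - J_M)J_M = \frac{3}{2}qI$ then isolates $S(X,Y) = \frac{1}{3q}\widehat{J_M}(\nabla_X J_M)Y$, which is exactly the claimed connection.

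For the metricity criterion I would compute $(\widetilde{\nabla}_X g)(Y,Z)$ directly by substituting $\widetilde{\nabla} = \nabla + S$ and using $\nabla g = 0$, reducing the task to simplifying $g(\widehat{J_M}(\nabla_X J_M)Y, Z) + g(Y, \widehat{J_M}(\nabla_X J_M)Z)$. The first summand I would handle with Proposition \ref{pro2.4}(i) (to replace $\widehat{J_M}(\nabla_X J_M)Y$ by $(\nabla_X J_M)J_M Y$) and then the skew-symmetry of Proposition \ref{pro2.4}(ii); the second summand I would expand via $\widehat{J_M} = pI - J_M$ and simplify using hyperbolicity (\ref{TCG0}). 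The two $g(J_M Y, (\nabla_X J_M)Z)$ contributions cancel, and only the piece coming from the $pI$ part of $\widehat{J_M}$ survives, yielding $(\widetilde{\nabla}_X g)(Y,Z) = \frac{p}{3q}g(Y, (\nabla_X J_M)Z)$ (the overall sign is irrelevant for what follows). Because $q > 0$ and $p \neq 0$, this tensor vanishes identically if and only if $(\nabla_X J_M)Z = 0$ for all $X, Z$, i.e. $\nabla J_M = 0$; by Theorem \ref{pro2.6} this is precisely the condition that $(M_{2k}, g, J_M)$ be a metallic K\"{a}hler manifold.

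Finally, the coincidence statement is immediate: if $(M_{2k}, g, J_M)$ is metallic K\"{a}hler then $\nabla J_M = 0$, so the correction tensor $S(X,Y) = \frac{1}{3q}\widehat{J_M}(\nabla_X J_M)Y$ vanishes identically and $\widetilde{\nabla} = \nabla$. I expect the main obstacle to be the bookkeeping in the metricity computation, namely tracking which of Proposition \ref{pro2.4}(i), (ii) and which instance of (\ref{TCG0}) applies to each summand and confirming that the $g(J_M Y, \cdot)$ terms cancel so that only the factor $p/(3q)$ remains. I would also record the hypothesis $p \neq 0$ explicitly, since the constraint $-\sqrt{6q} < p < \sqrt{6q}$ permits $p = 0$, in which case $(\widetilde{\nabla}_X g)(Y,Z)$ vanishes regardless of $\nabla J_M$ and the stated equivalence breaks down.
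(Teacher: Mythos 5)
Your proposal follows the paper's own argument essentially verbatim: it derives $S(X,Y)=\frac{1}{3q}\widehat{J_{M}}(\nabla _{X}J_{M})Y$ from (\ref{TCG17}) together with the two defining conditions, computes $(\widetilde{\nabla }_{X}g)(Y,Z)=\frac{p}{3q}g(Y,(\nabla _{X}J_{M})Z)$ using Proposition \ref{pro2.4} and (\ref{TCG0}), and then invokes Theorem \ref{pro2.6}, which is exactly the route the paper takes. Your closing caveat that the ``only if'' direction of the metricity criterion requires $p\neq 0$ (since for $p=0$ the right-hand side vanishes identically regardless of $\nabla J_{M}$) is correct and is a hypothesis the paper silently omits.
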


\begin{definition}
A linear connection $\widetilde{\nabla }_{X}Y=\nabla _{X}Y+S(X,Y)$ on an
almost metallic Hermitian manifold $(M_{2k},g,J_{M})$ satisfying $\widetilde{%
{\nabla }}\omega =0$ and $S_{J_{M}}(X,Y,Z)+S_{J_{M}}(Z,Y,X)=0$ is called a
linear connection of the second type.
\end{definition}

We can write%
\begin{eqnarray*}
({\nabla }_{X}\omega )(Y,Z)+S_{J_{M}}(X,Y,Z)-S_{J_{M}}(X,Z,Y) &=&0 \\
\left( \nabla _{Y}\omega \right) \left( Z,X\right) +S_{J_{M}}\left(
Y,Z,X\right) -S_{J_{M}}\left( Y,X,Z\right) &=&0 \\
\left( {\nabla }_{Z}\omega \right) \left( X,Y\right) +S_{J_{M}}\left(
Z,X,Y\right) -S_{J_{M}}\left( Z,Y,X\right) &=&0
\end{eqnarray*}%
from which, by virtue of $S_{J_{M}}(X,Y,Z)+S_{J_{M}}(Z,Y,X)=0$, it follows
that%
\begin{eqnarray}
{2}S_{J_{M}}\left( X,Y,Z\right) &=&\left( {\nabla }_{X}\omega \right) \left(
Y,Z\right) +\left( {\nabla }_{Y}\omega \right) \left( Z,X\right) +\left( {%
\nabla }_{Z}\omega \right) \left( X,Y\right)  \label{TCG16} \\
{2}g\left( S(X,Y),J_{M}Z\right) &=&d\omega (X,Y,Z)  \notag \\
-2g(J_{M}S(X,Y),Z) &=&d\omega (X,Y,Z).  \notag
\end{eqnarray}%
On an almost metallic K\"{a}hler manifold we get $S=0$, which means that $%
\widetilde{\nabla }=\nabla $. Hence, we have:

\begin{theorem}
If an almost metallic Hermitian manifold $(M_{2k},g,J_{M})$ is\textit{\
almost metallic }K\"{a}hler, the linear connection \textit{of the second
type is egual to }$\nabla $.\textit{\ }
\end{theorem}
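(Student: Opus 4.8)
The plan is to pin down the torsion tensor $S$ of the second-type connection from its two defining conditions and then to force its vanishing using the almost metallic K\"{a}hler hypothesis $d\omega = 0$. The structural identity needed for this has essentially already been assembled in (\ref{TCG16}) just above the statement, so the argument for the theorem is short: it specializes that identity to the case of a closed fundamental form.

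Concretely, I would first recall from the expansion (\ref{TCG17}) that the requirement $\widetilde{\nabla}\omega = 0$ is equivalent to
\[
(\nabla_X\omega)(Y,Z) + S_{J_M}(X,Y,Z) - S_{J_M}(X,Z,Y) = 0
\]
for all vector fields $X,Y,Z$, where $S_{J_M}(X,Y,Z) = g(S(X,Y),J_M Z)$. Writing this relation together with its two cyclic permutations of $(X,Y,Z)$ and combining them by means of the second-type symmetry $S_{J_M}(X,Y,Z) + S_{J_M}(Z,Y,X) = 0$ isolates $S_{J_M}(X,Y,Z)$, producing the identity (\ref{TCG16}) that expresses it through the covariant derivatives of $\omega$, i.e. through $d\omega$. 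Imposing the hypothesis that $(M_{2k},g,J_M)$ is almost metallic K\"{a}hler, which by definition means $d\omega = 0$, then forces $S_{J_M}(X,Y,Z) = 0$, that is $g(S(X,Y),J_M Z) = 0$ for all $X,Y,Z$. To pass from $S_{J_M} = 0$ to $S = 0$ I would use that $g$ is nondegenerate and that $J_M$ is invertible, with inverse $\tfrac{2}{3q}\widehat{J_M}$, as one checks directly from ${J_M}^2 = pJ_M - \tfrac{3}{2}qI$ (this uses $q > 0$). Consequently $J_M Z$ runs over the entire tangent space as $Z$ does, so $g(S(X,Y),J_M Z) = 0$ for all $Z$ gives $S(X,Y) = 0$ identically, whence $\widetilde{\nabla}_X Y = \nabla_X Y + S(X,Y) = \nabla_X Y$.

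The step I expect to demand the most care is the combinatorial one that isolates $S_{J_M}(X,Y,Z)$ from the three cyclic instances of $\widetilde{\nabla}\omega = 0$: because the second-type symmetry pairs the outer arguments (first and third) rather than the inner pair, the sign bookkeeping is delicate, and one must select the correct linear combination of the three relations rather than simply adding them. The concluding nondegeneracy argument, by contrast, is routine once the invertibility of $J_M$ has been noted, so essentially all of the content of the theorem is carried by the already-established identity (\ref{TCG16}) specialized at $d\omega = 0$.
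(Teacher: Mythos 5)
Your route is the same as the paper's: reduce the theorem to the identity (\ref{TCG16}) and then impose $d\omega =0$. The concluding nondegeneracy step you add (using $J_{M}\widehat{J_{M}}=\frac{3}{2}qI$, so that $J_{M}$ is invertible because $q>0$) is correct and is in fact more careful than the paper, which passes from $S_{J_{M}}=0$ to $S=0$ without comment. The difficulty lies precisely in the step you flag as delicate and then defer: the linear combination of the three cyclic instances of $\widetilde{\nabla }\omega =0$ does \emph{not} produce (\ref{TCG16}). Write $T(X,Y,Z)=(\nabla _{X}\omega )(Y,Z)$, $\sigma =S_{J_{M}}$, and abbreviate $a=\sigma (X,Y,Z)$, $b=\sigma (X,Z,Y)$, $c=\sigma (Z,X,Y)$. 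Using the second-type symmetry $\sigma (U,V,W)=-\sigma (W,V,U)$, the three cyclic relations become
\begin{align*}
T(X,Y,Z)+a-b&=0,\\
T(Y,Z,X)-b+c&=0,\\
T(Z,X,Y)+a+c&=0,
\end{align*}
and the combination isolating $a$ is the first minus the second plus the third, which gives
\begin{equation*}
2S_{J_{M}}(X,Y,Z)=-(\nabla _{X}\omega )(Y,Z)+(\nabla _{Y}\omega )(Z,X)-(\nabla _{Z}\omega )(X,Y),
\end{equation*}
not the cyclic sum asserted in (\ref{TCG16}). One can check directly that this formula satisfies both defining conditions of the second-type connection (so it is the unique solution), whereas the cyclic-sum formula fails to reproduce $\widetilde{\nabla }\omega =0$.

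Consequently the hypothesis $d\omega =0$, which annihilates the cyclic sum $(\nabla _{X}\omega )(Y,Z)+(\nabla _{Y}\omega )(Z,X)+(\nabla _{Z}\omega )(X,Y)$, only reduces the right-hand side above to $2(\nabla _{Y}\omega )(Z,X)$. Hence on an almost metallic K\"{a}hler manifold the second-type connection has $S_{J_{M}}(X,Y,Z)=(\nabla _{Y}\omega )(Z,X)$, equivalently $S(X,Y)=\frac{2}{3q}\widehat{J_{M}}(\nabla _{Y}J_{M})X$, and this vanishes only when $\nabla J_{M}=0$, i.e.\ only in the metallic K\"{a}hler case. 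So the passage from $d\omega =0$ to $S=0$ is exactly the step that fails, and it is the one step your proposal leaves unverified; the error is inherited from the paper's own derivation of (\ref{TCG16}), and your (correct) instinct that ``the sign bookkeeping is delicate'' is where the argument needs to be repaired or the hypothesis strengthened.
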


If the almost metallic Hermitian manifold $(M_{2k},g,J_{M})$ is\textit{\ }%
nearly metallic K\"{a}hler, then (\ref{TCG16}) reduces to%
\begin{eqnarray*}
-2g(J_{M}S(X,Y),Z) &=&3\left( {\nabla }_{X}\omega \right) \left( Y,Z\right)
\\
g(J_{M}S(X,Y),Z) &=&-\frac{3}{2}g(({\nabla }_{X}J_{M})Y,Z) \\
J_{M}S(X,Y) &=&-\frac{3}{2}({\nabla }_{X}J_{M})Y \\
S(X,Y) &=&-\frac{1}{q}\widehat{J_{M}}({\nabla }_{X}J_{M})Y.
\end{eqnarray*}%
Thus, we get:

\begin{theorem}
\noindent If an almost metallic Hermitian manifold $(M_{2k},g,J_{M})$ is%
\textit{\ }nearly metallic K\"{a}hler, the linear connection \textit{of the
second type} is given by 
\begin{equation*}
\widetilde{\nabla }=\nabla -\frac{1}{q}\widehat{J_{M}}({\nabla }J_{M}).
\end{equation*}
\end{theorem}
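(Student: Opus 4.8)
The plan is to specialize the universal characterization of the second-type connection, namely the identity (\ref{TCG16}), to the nearly metallic K\"{a}hler setting and then to solve the resulting algebraic equation for the difference tensor $S$. Since (\ref{TCG16}) already expresses $2S_{J_{M}}(X,Y,Z)$ as the cyclic sum $(\nabla_{X}\omega)(Y,Z)+(\nabla_{Y}\omega)(Z,X)+(\nabla_{Z}\omega)(X,Y)$ on any almost metallic Hermitian manifold, and since $S_{J_{M}}(X,Y,Z)=g(S(X,Y),J_{M}Z)$, the whole task reduces to simplifying this cyclic sum using the nearly K\"{a}hler hypothesis and then inverting $J_{M}$.

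First I would establish that, on a nearly metallic K\"{a}hler manifold, the $(0,3)$-tensor $T(X,Y,Z):=(\nabla_{X}\omega)(Y,Z)$ is totally skew-symmetric. It is skew in the last two arguments because $\omega$ is a $2$-form and $\nabla g=0$, and it is skew in the first two arguments by the defining relation (\ref{TCG9}). Combining these two antisymmetries forces skew-symmetry across the outer pair as well, so $T$ is alternating in all three slots. Consequently each cyclic permutation of the arguments reproduces $T(X,Y,Z)$, the three summands in (\ref{TCG16}) coincide, and the cyclic sum collapses to $3(\nabla_{X}\omega)(Y,Z)$. This is the one genuinely structural step, and I expect it to be the main point of the argument: everything afterward is formal.

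Then I would read off $2g(S(X,Y),J_{M}Z)=3(\nabla_{X}\omega)(Y,Z)$ and rewrite both sides. Using the hyperbolic compatibility (\ref{TCG0}) to move $J_{M}$ across the metric on the left, together with $(\nabla_{X}\omega)(Y,Z)=g((\nabla_{X}J_{M})Y,Z)$ on the right, gives $-2g(J_{M}S(X,Y),Z)=3g((\nabla_{X}J_{M})Y,Z)$ for all $Z$, hence $J_{M}S(X,Y)=-\tfrac{3}{2}(\nabla_{X}J_{M})Y$.

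Finally I would invert $J_{M}$. From the defining polynomial ${J_{M}}^{2}-pJ_{M}+\tfrac{3}{2}qI=0$ and $\widehat{J_{M}}=pI-J_{M}$ one gets $\widehat{J_{M}}J_{M}=\tfrac{3}{2}qI$, so applying $\widehat{J_{M}}$ to the previous identity yields $\tfrac{3}{2}qS(X,Y)=-\tfrac{3}{2}\widehat{J_{M}}(\nabla_{X}J_{M})Y$, that is $S(X,Y)=-\tfrac{1}{q}\widehat{J_{M}}(\nabla_{X}J_{M})Y$. Since $\widetilde{\nabla}=\nabla+S$, this is exactly the asserted formula $\widetilde{\nabla}=\nabla-\tfrac{1}{q}\widehat{J_{M}}(\nabla J_{M})$, completing the proof.
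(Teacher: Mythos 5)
Your proposal is correct and follows essentially the same route as the paper: specialize the identity (\ref{TCG16}) for the second-type connection, observe that on a nearly metallic K\"{a}hler manifold the cyclic sum collapses to $3(\nabla_{X}\omega)(Y,Z)$, and then invert $J_{M}$ via $\widehat{J_{M}}J_{M}=\tfrac{3}{2}qI$ to solve for $S$. The only difference is that you spell out explicitly (via total skew-symmetry of $(\nabla_{X}\omega)(Y,Z)$) why the three cyclic terms coincide, a step the paper leaves implicit.
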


\end{document}